\DeclareOldFontCommand{\bf}{\normalfont\bfseries}{\mathbf}
\newcommand{\KL}[2]{\mathrm{KL}\left( #1 \mid\mid #2 \right)}
\newcommand{\dhell}[2]{\mathrm{d}_{\mathrm{Hell}}\left( #1 , #2 \right)}
\newcommand{\btheta}{{\boldsymbol \theta}}
\newcommand{\bsigma}{{\boldsymbol \sigma}}
\newcommand{\btau}{{\boldsymbol \tau}}
\newcommand{\beps}{{\boldsymbol \epsilon}}
\renewcommand{\beps}{{\boldsymbol \epsilon}}
\newcommand{\bx}{{\mathbf{x}}}
\newcommand{\by}{{\mathbf{y}}}
\newcommand{\bz}{{\mathbf{z}}}
\newcommand{\bg}{{\mathbf{g}}}
\newcommand{\bu}{{\boldsymbol u}}
\newcommand{\eye}{{\mathbf{I}}}
\newcommand{\R}{\mathbb{R}}
\newcommand{\norm}[1]{\left\lVert #1 \right\rVert }
\newcommand{\inner}[2]{\left\langle #1,\ #2 \right \rangle}
\newcommand{\zeros}{{\mathbf 0}}
\newcommand{\dvg}{\nabla \cdot}
\newcommand{\bfx}{\boldsymbol x}
\newcommand{\bfv}{\boldsymbol v}
\newcommand{\bfu}{\boldsymbol u}
\newtheorem{assumption}{Assumption}
\newtheorem{proposition}{Proposition}
\newtheorem{remark}{Remark}
\newenvironment{keywords}%
   {\begin{trivlist}\item[]{\bfseries\sffamily Keywords:}\ }
   {\end{trivlist}}
\begin{document}

\title{Further analysis of multilevel Stein variational gradient descent with an application to the Bayesian inference of glacier ice models}

\date{\today}

\author{Terrence Alsup\footnote{Courant Institute of Mathematical Sciences, New York University, NY, USA} \and Tucker Hartland\footnote{Department of Applied Mathematics, University of California, Merced, CA, USA} \and Benjamin Peherstorfer\footnotemark[1] \and Noemi Petra\footnotemark[2]}

\maketitle

\begin{abstract}
Multilevel Stein variational gradient descent is a method for particle-based variational inference that leverages hierarchies of surrogate target distributions with varying costs and fidelity to computationally speed up inference. The contribution of this work is twofold. First, an extension of a previous cost complexity analysis is presented that applies even when the exponential convergence rate of single-level Stein variational gradient descent depends on iteration-varying parameters. Second, multilevel Stein variational gradient descent is applied to a large-scale Bayesian inverse problem of inferring discretized basal sliding coefficient fields of the Arolla glacier ice. The numerical experiments demonstrate that the multilevel version achieves orders of magnitude speedups compared to its single-level version.
\end{abstract}

\begin{keywords}
Multi-fidelity and multilevel methods, surrogate modeling, Bayesian inference, Stein variational gradient descent, ice sheet inverse problems
\end{keywords}

\section{Introduction}

Bayesian inference is a ubiquitous and flexible tool for updating a belief (i.e., learning) about a quantity of interest when data are observed, which ultimately can be used to inform downstream decision-making. In particular, Bayesian inverse problems allow one to derive knowledge from data through the lens of physics-based models. These problems can be formulated as follows: given observational data, a physics-based model, and prior information about the model inputs, find a posterior probability distribution for the inputs that reflects the knowledge about the inputs in terms of the observed data and prior. Typically, the physics-based models are given in the form of an input-to-observation map 
that is based on a system of partial differential equations (PDEs). 
The computational task underlying Bayesian inference is approximating posterior probability distributions to compute expectations and to quantify uncertainties. There are multiple ways of computationally exploring posterior distributions to gain insights, reaching from Markov chain Monte Carlo to variational methods~\cite{KS,S,doi:10.1137/19M1247176}. 

In this work, we make use of Stein variational gradient descent (SVGD) \cite{NIPS2016_b3ba8f1b}, which is a method for particle-based variational inference, to approximate posterior distributions. It builds on Stein's identity to formulate an update step for the particles that can be realized numerically in an efficient manner via a reproducing kernel Hilbert space. There are various extensions to SVGD such as exploiting curvature information of the target distribution with a corresponding Newton method \cite{NEURIPS2018_fdaa09fc} as well as using adaptive kernels as in \cite{duncan2019geometry, SteinMatrix}. Specifically for Bayesian inverse problems, SVGD has been extended to take advantage of low-dimensional structure \cite{CUI2016109} in the posterior distribution  \cite{NEURIPS2019_eea5d933} and the model states \cite{doi:10.1137/20M1321589}. Much effort has been put into understanding the convergence and statistical properties of SVGD and its variants. The study of convergence of SVGD was sparked primarily by \cite{NIPS2017_17ed8abe,doi:10.1137/18M1187611}, which showed that in the mean-field limit SVGD follows a gradient flow with respect to the Kullback-Leibler (KL) divergence. Similar results were later shown for the chi-squared divergence in \cite{Chewi20}. Pre-asymptotic convergence results in both the number of samples and the discrete-time setting remains open, but progress in this direction has been made in \cite{Korba}. There also has been work on understanding and improving the performance of SVGD in high dimensions \cite{Ba2019}.

We focus on the multilevel extension of SVGD (MLSVGD), which was introduced in~\cite{MLSVGD} and leverages hierarchies of approximations of a target posterior distribution with varying costs and fidelity to computationally speed up inference. Such approximations can be obtained via, e.g., coarse and fine discretizations of the governing equations of the physics-based models as well as surrogate models \cite{doi:10.1137/16M1082469} and simplified-physics models \cite{KONRAD2022110898}. Multilevel methods have a long tradition in scientific computing and computational statistics.  The MLSVGD approach is motivated by multi-fidelity and multilevel methods such as multilevel and multi-fidelity Monte Carlo \cite{MLMC, doi:10.1287/opre.1070.0496, cliffe_multilevel_2011,PWK16MFMCAsymptotics,Peherstorfer18SciTechAIAA} and Markov chain Monte Carlo (MCMC) methods \cite{doi:10.1137/130915005,DAMLMC,PM18MultiTM}. The MLSVGD also shares similarities with multilevel sequential Monte Carlo \cite{BESKOS20171417, LATZ2018154, doi:10.1137/19M1289601,PKW17MFCE} and importance sampling \cite{AP20Context, doi:10.1137/16M1082469,doi:10.1137/19M1289601}, multilevel particle filters \cite{doi:10.1137/15M1038232}, multilevel preconditioning \cite{doi:10.1137/1.9780898719505,HackbushMG,MLFT}, and multilevel ensemble Kalman methods \cite{doi:10.1137/15M100955X,doi:10.1137/21M1423762}, which all use hierarchies of surrogate models to generate samples sequentially. The work \cite{MLSVGD} provides a cost complexity analysis of MLSVGD that shows speedups compared to single-level SVGD; but it relies on an exponential convergence rate of SVGD with a fixed parameter and thus is limited in scope.

In this work, we contribute an analysis of MLSVGD that applies when the parameter of the convergence rate depends on the MLSVGD iteration. 
The finding is that the same cost complexity is achieved as in the fixed-parameter setting as long as mild conditions on the parameter can be made. We also show how the constants in the cost complexity change and that MLSVGD achieves speedups over single-level SVGD when the constants in the convergence rate of SVGD lead to a slow error decay. This is directly applicable to Bayesian inverse problems, where we show that the assumptions of the cost complexity analysis are satisfied in typical settings.

We numerically demonstrate MLSVGD on a Bayesian inverse problem of inferring a discretized basal sliding coefficient field from velocity observations at the surface of the Arolla glacier~\cite{Noemi2014}; see also \cite{ISMIP-HOM}. The numerical setup builds on FEniCS~\cite{fenics} and hIPPYlib \cite{VillaPetraGhattas16,VillaPetraGhattas18,VillaPetraGhattas21,https://doi.org/10.48550/arxiv.2112.00713}, which allows for fast gradient-based inference via adjoints. The numerical results show that MLSVGD performs inference at a fraction of the cost of inference with SVGD and that it leads to higher quality particles with respect to the maximum mean discrepancy (MMD) \cite{Gretton} than samples obtained with a variant of MCMC.  

The manuscript is organized as follows.  In Section~\ref{sec:Prelim} we outline preliminaries on Bayesian inverse problems, SVGD, and previous work on MLSVGD. Section~\ref{sec:3} introduces extended cost complexity bounds for MLSVGD that apply in more general settings. In Section~\ref{sec:NumExp} we demonstrate improvements by several factors in terms of computational savings of MLSVGD over SVGD for inferring the basal sliding coefficient in the Arolla glacier ice model. We conclude in Section~\ref{sec:Conc}.

\section{Preliminaries}\label{sec:Prelim}

In this section, Bayesian inverse problems are reviewed and it is discussed how they are related to sampling from a target distribution with, e.g., SVGD and MLSVGD.  

\subsection{Bayesian inverse problems}
\label{sec:2p1}

Let $G : \Theta \to \mathbb{R}^{q}$ denote a parameter-to-observable map and consider noisy data $\by = G(\btheta^*) + {\boldsymbol \eta}$, where ${\boldsymbol \eta} \sim N(\zeros, \Gamma)$ with known noise covariance matrix $\Gamma \in \mathbb{R}^{q\times q}$ and $\btheta^* \in \Theta \subset \mathbb{R}^d$.  Given a prior $\pi_0:\Theta \to \mathbb{R}$, the target posterior density is
\begin{equation}\label{eq:Prelim:TargetPDF}
    \pi(\btheta) \propto \exp\left(-\frac{1}{2}\norm{ \by - G(\btheta) }^2_{\Gamma^{-1}} \right) \pi_0(\btheta)\, ,
\end{equation}
where $\norm{\bfv}_{\Gamma^{-1}}^2 = \inner{\bfv}{\Gamma^{-1} \bfv}$.  In many computational science and engineering applications, the parameter-to-observable map $G$ depends on the solution of an underlying system of PDEs, which means that it cannot be evaluated directly. Instead,  one must resort to a numerical method that discretizes the underlying PDE problem to approximately evaluate $G$.  Let $G^{(\ell)}$ be such an approximate parameter-to-observable map, where the index $\ell$ denotes the fidelity and corresponds to,  e.g., the mesh width or number of grid points. The larger $\ell$, the more accurate the approximation $G^{(\ell)}$ of $G$ in the following. The corresponding low-fidelity posterior is
\begin{equation}
    \pi^{(\ell)}(\btheta) \propto \exp\left(-\frac{1}{2}\norm{ \by - G^{(\ell)}(\btheta) }^2_{\Gamma^{-1}} \right) \pi_0(\btheta)\, .
\label{eq:SurrogateDensity}
\end{equation}
Increasing the level $\ell$, gives rise to a sequence of densities $(\pi^{(\ell)})_{\ell = 1}^{\infty}$ that converges pointwise $\pi^{(\ell)}(\btheta) \to \pi(\btheta)$ for every $\btheta \in \Theta$ so that the sequence of random variables $\btheta^{(\ell)}\sim \pi^{(\ell)}$ converges weakly to $\btheta \sim \pi$.  

Our aim is to compute quantities of interest of the form
\begin{equation}
    \mathbb{E}_{\pi}[f] = \int_{\Theta} f(\btheta) \mathrm{d}\pi(\btheta)\, ,
    \label{eq:qoi}
\end{equation}
for given test functions $f : \Theta \to \mathbb{R}$. Because $\pi$ is not readily available, one typically selects a sufficiently accurate $G^{(L)}$ and approximates the quantity of interest with respect to the corresponding density $\pi^{(L)}$, \begin{equation}
    \mathbb{E}_{\pi^{(L)}}[f] = \int_{\Theta} f(\btheta) \mathrm{d}\pi^{(L)}(\btheta)\,.
    \label{eq:qoiL}
\end{equation}
A well-established approach to estimate~\eqref{eq:qoiL} using Monte Carlo involves drawing samples $\btheta^{[1]},\ldots,\btheta^{[N]}$ of the distribution with density $\pi^{(L)}$ and computing 
\begin{equation}
    \hat{f} = \frac{1}{N}\sum_{i=1}^N f(\btheta^{[i]} ) \, .
\label{eq:MonteCarlo}
\end{equation}
For example, the samples $\btheta^{[1]},\ldots,\btheta^{[N]}$ may be i.i.d.~or come from a realization of an ergodic Markov chain. This gives rise to two sources of error with respect to the quantity of interest \eqref{eq:qoi}.  
The first source of error is the Monte Carlo error of estimating the expectation in \eqref{eq:qoiL} with \eqref{eq:MonteCarlo}, while the second source of error is due to using the deterministic approximation $G^{(L)}$ of $G$, and thus $\pi^{(L)}$ instead of $\pi$.   The Monte Carlo error can be controlled with the number of samples $N$. The second error is controlled by the level $L$, which can be selected via, e.g., the Hellinger distance so that 
    \[
        \dhell{\pi^{(L)}}{\pi} \le \epsilon
    \]
    holds for some tolerance $\epsilon > 0$. 
The Hellinger distance is particularly useful because it is a metric on the space of probability measures, allowing to separate the deterministic error due to the fidelity and the statistical error due to sampling, and can be bounded from above by the KL divergence
\begin{equation}
    2\dhell{\mu_1}{\mu_2}^2 \le \KL{\mu_1}{\mu_2} \, .
\label{eq:HellKLBound}
\end{equation}

\subsection{Stein variational gradient descent}
\label{sec:2p2}

We now briefly review SVGD \cite{NIPS2016_b3ba8f1b} that aims to derive a sequence of distributions to minimize the KL divergence with respect to the target density $\pi^{(L)}$.  Once convergence has been reached, the quantity of interest~\eqref{eq:qoi} can be estimated using particles of the distribution. 

Let $\mathcal{H}$ be a reproducing kernel Hilbert space (RKHS) with positive definite kernel $K:\Theta \times \Theta \to \mathbb{R}$ of functions $g:\Theta \to \mathbb{R}$ and let $\mathcal{H}^d \simeq \mathcal{H} \times \cdots \times \mathcal{H}$ be the corresponding RKHS of vector fields $\bg = (g_1,\ldots,g_d):\Theta^d \to \mathbb{R}^d$.  Define the KL functional
\[
    J_{\mu}(\bg) = \KL{ (\eye - \bg)_{\#} \mu  }{ \pi^{(L)} } \, ,
\]
where $(\eye - \bg)_{\#}\mu$ denotes the pushforward measure of $\mu$ under the map $\eye - \bg$, so that if $\btheta \sim \mu$, then $\btheta - \bg(\btheta) \sim (\eye - \bg)_{\#}\mu$, with $\eye : \mathbb{R}^d \to \mathbb{R}^d$ being the identity map and $\bg \in \mathcal{H}^d$.  From the particle point of view, SVGD starts with an initial particle $\btheta_0 \sim \mu_0$ and evolves it according to the gradient dynamics, also known as the mean-field characteristic flow \cite{doi:10.1137/18M1187611},
\begin{equation}
       \dot{\btheta}_t = - \nabla J_{\mu_t}(\zeros)(\btheta_t) \, ,
      \label{eq:SVGDParticle}
\end{equation}
where $\mu_t$ denotes the density of $\btheta_t$ at time $t \ge 0$.  The gradient $\nabla J_{\mu}(\zeros)$ can be computed using the following relation derived in \cite{NIPS2016_b3ba8f1b}
    \begin{equation}
        \nabla J_{\mu}(\zeros)\left( \btheta \right) = 
        -\mathbb{E}_{\bz \sim \mu}\left[  K(\bz, \btheta) \nabla \log \pi^{(L)}(\bz) + \nabla_1 K(\bz, \btheta)  \right] \, ,
        \label{eq:SteinIdentity}
    \end{equation}
where $\nabla_1$ denotes the gradient with respect to the first argument.  The density $\mu_t$ is the solution of the nonlinear Fokker-Planck equation corresponding to the particle evolution \eqref{eq:SVGDParticle}
    \begin{equation}
        \partial_t \mu_t(\btheta) = -\dvg\left(  \mu_t (\btheta) \mathbb{E}_{\bz \sim \mu_t} \left[  K(\bz, \btheta)\nabla \log \pi^{(L)}(\bz) + \nabla_1 K(\bz, \btheta) \right] \right) \, .
        \label{eq:SVGDDensity}
    \end{equation}

Much of the analysis of SVGD revolves around understanding the solution $\mu_t$ to the, potentially high-dimensional, nonlinear PDE \eqref{eq:SVGDDensity}.  One key result that arises due to the gradient flow dynamics \eqref{eq:SVGDDensity} is that the KL divergence $\KL{\mu_t}{\pi^{(L)}}$ converges to zero and it was shown in \cite[Theorem 3.4]{NIPS2017_17ed8abe} that for a solution $\mu_t$ of \eqref{eq:SVGDDensity} with $\KL{\mu_0}{\pi^{(L)}} < \infty$, it holds that
    \begin{equation}
        \frac{\mathrm{d}}{\mathrm{d}t} \KL{\mu_t}{\pi^{(L)}} = -\mathbb{D}(\mu_t \mid \mid \pi^{(L)} )^2\, ,
       \label{eq:LyapunovKL}
    \end{equation}
where
    \[
        \mathbb{D}(\mu \mid \mid \nu) = \underset{\bg \in \mathcal{H}^d}{\max}\ \left\{ \mathbb{E}_{\btheta \sim \mu}[\nabla \log \nu(\btheta)^{\top}\bg(\btheta) + \dvg \bg(\btheta)] : \norm{\bg}_{\mathcal{H}} \le 1 \right\}
    \]
is the Stein discrepancy, guaranteeing that the KL divergence from the target decreases monotonically.  The result~\eqref{eq:LyapunovKL} provides motivation for considering a monotone convergence behavior as in Assumption~\ref{asm:SVGDRateGeneral} later.  The Stein discrepancy $\mathbb{D}(\mu \mid \mid \nu) = 0$ if $\mu = \nu$, but the converse may only be valid if the space $\mathcal{H}$ is sufficiently rich and can otherwise result in a biased estimate of the quantity of interest~\eqref{eq:qoi}.
    
    \begin{remark}
    There is a strong connection between SVGD and the unadjusted Langevin algorithm \cite{JKO98} in the sense that the Langevin algorithm evolves a density that minimizes the KL divergence in the Wasserstein metric as opposed to a SVGD that uses a kernelized Wasserstein metric \cite{Chewi20}.
    \end{remark}

\subsection{Multilevel Stein variational gradient descent}
\label{sec:2p3}

The work \cite{MLSVGD} introduced a multilevel variant of SVGD and showed that one can achieve a cost complexity reduction by integrating the continuous-time mean-field flow \eqref{eq:SVGDParticle} with successively more accurate and more expensive-to-evaluate low-fidelity densities $\pi^{(1)},\ldots,\pi^{(L)}$ as opposed to integrating only with respect to the high-fidelity density $\pi^{(L)}$.  The analysis in \cite{MLSVGD} of the cost complexity relied on the following assumptions.

    \begin{assumption}
    The costs $c_{\ell}$ of integrating \eqref{eq:SVGDDensity} with target density $\pi^{(\ell)}$ for a unit time interval are bounded as
    \[
        c_{\ell} \leq c_0 s^{\gamma \ell}\,,\qquad \ell \in \mathbb{N}\,,
    \]
    with constants $c_0, \gamma > 0$ independent of $\ell$ and $s > 1$.
    \label{asm:Costs}
    \end{assumption}

    \begin{assumption}
    There exists $\alpha, k_0, k_1 > 0$ independent of $\ell$ such that $\KL{\mu_0}{\pi^{(\ell)}} \leq k_0$ for all $\ell \in \mathbb{N}$ and 
    \[
        \KL{\pi^{(\ell)}}{\pi} \leq k_1 s^{-\alpha \ell}\,,\qquad \ell \in \mathbb{N}\,,
    \]
    where $s$ is the same constant independent of $\ell$ as in Assumption~\ref{asm:Costs} and $\mu_0$ is the initial distribution.
    \label{asm:BiasRate}
    \end{assumption}

    \begin{assumption}
    There exists a rate $\lambda > 0$ such that for any initial distribution $\nu_0$
    \begin{equation}\label{eq:KLExpBound}
        \KL{\nu_t}{\pi^{(\ell)}} \leq \mathrm e^{-\lambda t} \KL{\nu_0}{\pi^{(\ell)}}\,,\qquad \ell \in \mathbb{N}\,,
    \end{equation}
    holds, where $\nu_t$ solves the mean-field SVGD equation \eqref{eq:SVGDDensity} at time $t$.
    \label{asm:SVGDRate}
    \end{assumption}

Single-level SVGD derives an approximation $\mu^{\mathrm{SL}}$ such that $\dhell{\mu^{\mathrm{SL}}}{\pi} \le \epsilon$, by selecting a high-fidelity approximation $\pi^{(L)}$ with 
\begin{equation}
\dhell{\pi^{(L)}}{\pi} \le \epsilon/2
\label{eq:Prelim:CondOnL}
\end{equation}
and then integrating~\eqref{eq:SVGDDensity} with respect to $\pi^{(L)}$ for time $T_{\mathrm{SL}}(\epsilon)$
    \begin{equation}
        T_{\mathrm{SL}}(\epsilon) =   \min \left\{ t \ge 0\ :\ \dhell{\mu_t}{\pi^{(L)}} \le \frac{\epsilon}{2} \right\}\, .
    \label{eq:SLTime}
    \end{equation}
This leads to the cost of  
single-level SVGD
    \[
        c_{\mathrm{SL}}(\epsilon) = c_{L(\epsilon)} T_{\mathrm{SL}}(\epsilon) \, ,
    \]
    where the cost $c_{L(\epsilon)}$ depends on $\epsilon$ through the level $L$ that is selected such that \eqref{eq:Prelim:CondOnL} holds. 
    In the remainder of this manuscript, for brevity, we drop the explicit dependence $L = L(\epsilon)$ and similarly $T_{\mathrm{SL}} = T_{\mathrm{SL}}(\epsilon)$ when $\epsilon$ is fixed.
    The following upper bound for the cost complexity of single-level SVGD was derived in \cite[Proposition 2]{MLSVGD}.

    \begin{proposition}
    If Assumptions~\ref{asm:Costs}--\ref{asm:SVGDRate} hold, then the costs of  
    single-level SVGD to obtain $\mu^{\mathrm{SL}}$ with
    \[
        \dhell{\mu^{\mathrm{SL}}}{\pi} \leq \epsilon\, ,
    \]
    is bounded as
    \begin{equation}
        c_{\mathrm{SL}}(\epsilon) \le \frac{2c_0 s^{\gamma}}{\lambda} \left( \frac{\sqrt{2k_1}}{\epsilon}  \right)^{2\gamma/\alpha} \log \left( \frac{\sqrt{\KL{\mu_0}{\pi^{(L)}}}}{\sqrt{2}\epsilon} \right) \, ,
    \label{eq:MLSVGD:SLSVGDCost}
    \end{equation}
    with high-fidelity level
    \begin{equation}
        L = \left\lceil \frac{1}{2 \alpha} \log_s \left( \frac{\sqrt{2 k_1}}{\epsilon} \right) \right \rceil \, .
    \label{eq:LevelEpsilon}
    \end{equation}
    \label{prop:SVGDComplexity}
    \end{proposition}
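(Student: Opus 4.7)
The plan is to exploit the triangle inequality for the Hellinger distance, splitting the target error $\dhell{\mu^{\mathrm{SL}}}{\pi}\le\epsilon$ into a deterministic bias contribution $\dhell{\pi^{(L)}}{\pi}$, which is controlled via the fidelity level $L$, and a sampling contribution $\dhell{\mu_{T_{\mathrm{SL}}}}{\pi^{(L)}}$, which is controlled via the integration time. Requiring each contribution to be at most $\epsilon/2$ and multiplying the resulting bounds on per-unit cost $c_L$ and stopping time $T_{\mathrm{SL}}$ will yield the cost complexity \eqref{eq:MLSVGD:SLSVGDCost}.

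First I would fix the smallest level $L$ that makes the bias at most $\epsilon/2$. Combining Assumption~\ref{asm:BiasRate} with the Hellinger-KL inequality \eqref{eq:HellKLBound} gives
\[
   \dhell{\pi^{(L)}}{\pi}^{2}\;\le\;\tfrac{1}{2}\KL{\pi^{(L)}}{\pi}\;\le\;\tfrac{1}{2}k_{1} s^{-\alpha L},
\]
and inverting the inequality $\sqrt{k_{1}/2}\,s^{-\alpha L/2}\le\epsilon/2$ in $L$ identifies the smallest admissible integer and so delivers the ceiling expression \eqref{eq:LevelEpsilon}. Plugging this $L$ into the cost model $c_L \le c_0 s^{\gamma L}$ of Assumption~\ref{asm:Costs} and absorbing the ceiling into a factor of $s^\gamma$ produces the algebraic prefactor $c_0 s^\gamma (\sqrt{2k_1}/\epsilon)^{2\gamma/\alpha}$ appearing in \eqref{eq:MLSVGD:SLSVGDCost}.

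Next I would bound the stopping time $T_{\mathrm{SL}}$. The exponential contraction from Assumption~\ref{asm:SVGDRate}, composed with \eqref{eq:HellKLBound}, gives $\dhell{\mu_t}{\pi^{(L)}}^{2}\le\tfrac{1}{2}\mathrm e^{-\lambda t}\KL{\mu_0}{\pi^{(L)}}$; requiring the right-hand side to be at most $(\epsilon/2)^{2}$ and solving for the smallest such $t$ yields an expression of the form $\lambda^{-1}\log(\cdots)$ matching the logarithmic factor in \eqref{eq:MLSVGD:SLSVGDCost}. Multiplying the per-unit cost $c_L$ from the previous step by this time bound $T_{\mathrm{SL}}$ produces the claimed estimate.

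The only real obstacle is bookkeeping: the $\sqrt{2}$ factors coming from \eqref{eq:HellKLBound}, the halves produced by the triangle-inequality split between bias and sampling, and the ceiling in the definition of $L$ must be threaded through carefully so that the prefactor $2c_0 s^\gamma/\lambda$, the exponent $2\gamma/\alpha$, and the precise logarithmic argument $\sqrt{\KL{\mu_0}{\pi^{(L)}}}/(\sqrt{2}\epsilon)$ all emerge in the stated form. Conceptually, no ingredient beyond Assumptions~\ref{asm:Costs}--\ref{asm:SVGDRate} combined with \eqref{eq:HellKLBound} is needed.
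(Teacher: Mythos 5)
Your argument is essentially the paper's own: the paper proves the generalized Proposition~\ref{prop:SVGDcomplexityGeneral} by exactly this route (Hellinger triangle inequality, the bound \eqref{eq:HellKLBound}, a level choice from Assumption~\ref{asm:BiasRate}, a stopping-time bound from the contraction assumption, then multiplying cost per unit time by the time), and Proposition~\ref{prop:SVGDComplexity} is recovered by specializing $r(t)=\mathrm{e}^{-\lambda t}$. One caveat: your bias step does not actually ``deliver'' \eqref{eq:LevelEpsilon} as you claim — requiring $\sqrt{k_1 s^{-\alpha L}/2}\le \epsilon/2$ gives $L=\left\lceil \frac{1}{\alpha}\log_s\left(2k_1/\epsilon^2\right)\right\rceil$, i.e.\ the choice \eqref{eq:highfidlevel} used in the paper's proof of the generalized result, and it is this choice (not \eqref{eq:LevelEpsilon} as printed, whose coefficient is off by a factor of four) that is consistent with the prefactor $(\sqrt{2k_1}/\epsilon)^{2\gamma/\alpha}$ in \eqref{eq:MLSVGD:SLSVGDCost}; likewise the stopping-time computation yields $T_{\mathrm{SL}}\le \frac{1}{\lambda}\log\left(2\KL{\mu_0}{\pi^{(L)}}/\epsilon^2\right)=\frac{2}{\lambda}\log\left(\sqrt{2\KL{\mu_0}{\pi^{(L)}}}/\epsilon\right)$, a slightly larger logarithm than the one printed in \eqref{eq:MLSVGD:SLSVGDCost}. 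These are constant-level discrepancies with the proposition as stated (apparently inherited from the cited reference), not structural flaws in your approach, but you should not present the bookkeeping as reproducing \eqref{eq:LevelEpsilon} verbatim when it in fact produces \eqref{eq:highfidlevel}.
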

From~\eqref{eq:MLSVGD:SLSVGDCost}, a higher initial KL divergence $\KL{\mu_0}{\pi^{(L)}}$ or a slower convergence rate (small $\lambda$) for SVGD will result in a larger cost complexity to obtain the single-level SVGD approximation of $\pi^{(L)}$.

    In contrast to single-level SVGD, the MLSVGD method introduced in \cite{MLSVGD}  first integrates with respect to the cheapest and least accurate lowest fidelity density $\pi^{(1)}$ for time $T_1 > 0$ to obtain density $\mu^{(1)}_{T_{1}}$, which serves as an initial density for the next level and so on until the highest level $L$ is reached.  For $\ell = 1,\ldots,L$, let $\mu_{T_{\ell}}^{(\ell)}$ be the solution of \eqref{eq:SVGDDensity}, with the low-fidelity density $\pi^{(\ell)}$ replacing the target $\pi$, at time $T_{\ell}$ with initial density $\mu_{T_{\ell - 1}}^{(\ell - 1)}$ where the times $T_{\ell}$ are given by
    \begin{equation}
        T_{\ell} = \min \left\{ t \ge 0\ :\  \KL{\mu^{(\ell)}_{t}}{ \pi^{(\ell)}} \le \frac{\epsilon_{\ell}^2}{2} \right\} \, ,
    \label{eq:epsiloneq}
    \end{equation}
    where $\epsilon_1 \ge \epsilon_2 \ge \ldots \ge \epsilon_L$ and $\epsilon_L \le \epsilon$ is a sequence of tolerances.  Then, the continuous-time MLSVGD approximation is defined as
    \[
        \mu^{\mathrm{ML}} = \mu_{T_L}^{(L)} \, ,
    \]
    which gives the cost of MLSVGD as
    \[
        c_{\mathrm{ML}}(\epsilon) = \sum_{\ell=1}^L c_{\ell} T_{\ell} \, ,
    \]
    where both $L$ and $T_{\ell}$ will depend on $\epsilon$.  Since the KL divergence does not satisfy the triangle inequality, the following assumption for MLSVGD ensures that the KL divergence between levels converges as well, which is different from Assumption~\ref{asm:BiasRate}.

    \begin{assumption}
    There exists a constant $k_2 > 0$ independent of $\ell$ such that $\KL{\pi^{(\ell - 1)}}{\pi^{(\ell)}} \leq k_2 s^{-\alpha \ell}$\,, where $\alpha$ is the same rate as in Assumption~\ref{asm:BiasRate}.
    \label{asm:MLBiasRate}
    \end{assumption}

With these additional assumptions one can derive the cost complexity for MLSVGD \cite[Proposition 4]{MLSVGD} below.

    \begin{proposition}
    If Assumptions~\ref{asm:Costs}--\ref{asm:MLBiasRate} hold and $R_{\ell} \leq k_3 s^{-\alpha \ell}$ and
    \[
        \epsilon_{\ell} = \sqrt{2 k_1} s^{-\alpha \ell / 2}\, ,
    \]
    where
        \begin{equation}
        R_{\ell} = \int_{\R^d} \left(\mu^{(\ell-1)}_{T_{\ell-1}}(\btheta) - \pi^{(\ell-1)}(\btheta) \right) \log\left( \frac{\pi^{(\ell-1)}(\btheta)}{\pi^{(\ell)}(\btheta)}  \right)\ \mathrm{d}\btheta\,,
        \label{eq:ConstCL}
    \end{equation}
    then the costs of  
    MLSVGD to have $\dhell{\mu^{\mathrm{ML}}}{\pi} \le \epsilon$ can be bounded as
    \begin{equation}
        c_{\mathrm{ML}}(\epsilon) \le \frac{c_0 s^{2 \gamma}}{\lambda \gamma \log(s)} \log\left( s^{\alpha} + \frac{k_2 + k_3}{k_1} \right) \left( \frac{\sqrt{2k_1}}{\epsilon}  \right)^{2\gamma/\alpha}\, .
        \label{eq:MLSVGD:FastC}
    \end{equation}
    \label{prop:MLSVGD:MLSVGDCosts}
    \end{proposition}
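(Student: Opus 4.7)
The plan is to bound $c_{\mathrm{ML}}(\epsilon) = \sum_{\ell=1}^L c_\ell T_\ell$ by showing that every per-level integration time $T_\ell$ is controlled by a single constant $\tau$ independent of $\ell$, choosing the highest level $L$ from the target Hellinger tolerance, and then evaluating the resulting geometric sum using Assumption~\ref{asm:Costs}.

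The core step is the level-uniform bound on $T_\ell$. Applying Assumption~\ref{asm:SVGDRate} to the flow initialized at $\mu^{(\ell-1)}_{T_{\ell-1}}$ with target $\pi^{(\ell)}$ shows that the stopping condition \eqref{eq:epsiloneq} is met as soon as $\mathrm{e}^{-\lambda T_\ell}\,\KL{\mu^{(\ell-1)}_{T_{\ell-1}}}{\pi^{(\ell)}} \le \epsilon_\ell^2/2$, giving
\[
T_\ell \le \frac{1}{\lambda}\log\left( \frac{2\,\KL{\mu^{(\ell-1)}_{T_{\ell-1}}}{\pi^{(\ell)}}}{\epsilon_\ell^2}\right).
\]
Since KL lacks a triangle inequality, I would decompose the initial divergence by adding and subtracting $\log \pi^{(\ell-1)}$ under the integral:
\[
\KL{\mu^{(\ell-1)}_{T_{\ell-1}}}{\pi^{(\ell)}} = \KL{\mu^{(\ell-1)}_{T_{\ell-1}}}{\pi^{(\ell-1)}} + R_\ell + \KL{\pi^{(\ell-1)}}{\pi^{(\ell)}},
\]
where $R_\ell$ is exactly the remainder defined in \eqref{eq:ConstCL}. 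Using the stopping criterion at level $\ell-1$ to bound the first term by $\epsilon_{\ell-1}^2/2 = k_1 s^{\alpha} s^{-\alpha \ell}$, the hypothesis $R_\ell \le k_3 s^{-\alpha \ell}$ for the second, and Assumption~\ref{asm:MLBiasRate} for the third, the initial divergence is at most $(k_1 s^\alpha + k_2 + k_3)\, s^{-\alpha \ell}$. With $\epsilon_\ell^2 = 2 k_1 s^{-\alpha \ell}$ the $s^{-\alpha \ell}$ factors cancel and I obtain
\[
T_\ell \le \frac{1}{\lambda}\log\left( s^{\alpha} + \frac{k_2 + k_3}{k_1} \right) =: \tau,
\]
uniformly in $\ell$. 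The base case $\ell = 1$ is handled analogously using $\KL{\mu_0}{\pi^{(1)}} \le k_0$ from Assumption~\ref{asm:BiasRate} in place of the inductive stopping condition, absorbed into $\tau$.

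For the highest level, I would use the triangle inequality $\dhell{\mu^{\mathrm{ML}}}{\pi} \le \dhell{\mu^{(L)}_{T_L}}{\pi^{(L)}} + \dhell{\pi^{(L)}}{\pi}$, bound each summand by $\epsilon_L/2$ via \eqref{eq:HellKLBound} using $\KL{\mu^{(L)}_{T_L}}{\pi^{(L)}} \le \epsilon_L^2/2$ and Assumption~\ref{asm:BiasRate}, and set $\epsilon_L = \epsilon$, which forces $L = \lceil (2/\alpha)\log_s(\sqrt{2k_1}/\epsilon)\rceil$. Combining with Assumption~\ref{asm:Costs} gives
\[
c_{\mathrm{ML}}(\epsilon) \le \tau\, c_0 \sum_{\ell=1}^{L} s^{\gamma \ell} \le \tau\, c_0\, \frac{s^{\gamma(L+1)}}{s^\gamma - 1},
\]
and substituting $s^{\gamma L} \le s^{\gamma}(\sqrt{2k_1}/\epsilon)^{2\gamma/\alpha}$ together with the elementary inequality $s^\gamma - 1 \ge \gamma \log s$ produces the bound \eqref{eq:MLSVGD:FastC}.

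The main obstacle is the KL decomposition that produces $R_\ell$: because KL is not a metric, a direct triangle-type split is unavailable, and the precise identification of the cross term with \eqref{eq:ConstCL} is what allows the hypothesis $R_\ell \le k_3 s^{-\alpha \ell}$ to be invoked. The structural reason the complexity matches single-level SVGD in its $\epsilon$-dependence while saving a $\log(1/\epsilon)$ factor is exactly the cancellation of the $s^{-\alpha \ell}$ factors between the initial-divergence bound and the geometrically decaying tolerances $\epsilon_\ell$, which renders $T_\ell$ independent of $\ell$.
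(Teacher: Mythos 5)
Your proposal is correct and follows essentially the same route as the paper: the paper proves the generalized Proposition~\ref{prop:MLSVGD:CostsGeneral} using the identical KL decomposition that isolates $R_{\ell}$, the same level-uniform bound on $T_{\ell}$ obtained by cancelling the $s^{-\alpha\ell}$ factors, the same choice of $L$, and the same geometric sum, and then recovers \eqref{eq:MLSVGD:FastC} by setting $r(t)=\mathrm{e}^{-\lambda t}$ together with $s^{\gamma}-1\ge\gamma\log s$, which is exactly your argument instantiated at the exponential rate. The only loose point, your claim that the $\ell=1$ integration time (which involves $k_0$ rather than $\epsilon_{0}$) is ``absorbed into $\tau$'', mirrors the paper's own implicit treatment of the base case of the recursion, so it is not a substantive deviation.
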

    
    The cost complexity of MLSVGD scales at most as $\mathcal{O}(\epsilon^{-2\gamma/\alpha})$, whereas the cost complexity for single-level SVGD has an additional $\log \epsilon^{-1}$ factor.  Furthermore, the bound \eqref{eq:MLSVGD:FastC} is independent of the KL divergence of the initial density $\mu_0$ and instead only depends on the constant $k_2$ that measures the KL divergence between successive levels.

\section{Further analysis of MLSVGD}
\label{sec:3}

We now extend the analysis of MLSVGD to apply in settings where SVGD exhibits an exponential convergence rate with a varying parameter.

\subsection{Cost bound for MLSVGD}
\label{sec:3p1}

We now consider a relaxed assumption on the convergence rate that includes having $\lambda(t) \geq 0$ depend on time $t$ so that the multiplicative factor in \eqref{eq:KLExpBound} becomes $\mathrm e^{\lambda(t) t}$. The following assumption formalizes the time-dependent convergence factor as $r(t)$, which includes the case with factor $\mathrm e^{\lambda(t)t}$.

    \begin{assumption}
    There exists a decreasing function $r:[0,\infty) \to [0,1]$ such that $r(0) = 1$, $\lim_{t\to\infty}r(t) = 0$, and for an initial distribution $\nu_0$ 
    \[
        \KL{\nu_t}{\pi^{(\ell)}} \leq 
        r(t) \KL{\nu_0}{\pi^{(\ell)}}\,,\qquad \ell \in \mathbb{N}\,,
    \]
    holds, where $\nu_t$ is the solution of the mean-field SVGD equation~\eqref{eq:SVGDDensity} at time $t$.
    \label{asm:SVGDRateGeneral}
    \end{assumption}
    
    Note that by \eqref{eq:LyapunovKL}, for any fixed $\ell \in \mathbb{N}$ and initial distribution $\nu_0$, the relation in Assumption~\ref{asm:SVGDRateGeneral} holds, however, Assumption~\ref{asm:SVGDRateGeneral} is stronger in that it requires the inequality to hold uniformly for all levels $\ell$ and initial distributions $\nu_0$.  
    The uniformness implies that there exists a $\lambda > 0$ with $\lambda(t) \geq \lambda$ so that the analysis of \cite{MLSVGD} applies. However, we obtain a tighter bound in terms of constants in the following. 
    In the case where $r$ is not invertible due to a discontinuity, we define
    \begin{equation}
        r^{-1}(\epsilon) = \min \left\{  t \in [0,\infty)\ :\ r(t) \le \epsilon \right\} \, .
    \label{eq:InverseRate}
    \end{equation}

We now derive a result analogous to Proposition~\ref{prop:SVGDComplexity}.

\begin{proposition}
    If Assumptions~\ref{asm:Costs},\ref{asm:BiasRate},\ref{asm:SVGDRateGeneral} hold, then the costs of  
    SVGD to obtain $\mu^{\mathrm{SL}}$ with
    \[
        \dhell{\mu^{\mathrm{SL}}}{\pi} \leq \epsilon\, ,
    \]
    is bounded as
    \begin{equation}
        \mathrm{c}_{\mathrm{SL}}(\epsilon) \le c_0 s^{\gamma L} T_{\mathrm{SL}}  \le 2c_0 s^{\gamma} (2k_1)^{\gamma/\alpha}  r^{-1}\left( \frac{\epsilon^2}{2 \KL{\mu_0}{\pi^{(L)}}}  \right) \epsilon^{-2\gamma/\alpha}  \,.
    \label{eq:MLSVGD:SLSVGDCostGeneral}
    \end{equation}
\label{prop:SVGDcomplexityGeneral}
\end{proposition}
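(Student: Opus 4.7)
The plan is to follow the structure of the argument behind Proposition~\ref{prop:SVGDComplexity}, replacing the exponential convergence factor $\mathrm e^{-\lambda t}$ by the general decreasing factor $r(t)$ from Assumption~\ref{asm:SVGDRateGeneral} and expressing the resulting stopping time through the generalized inverse $r^{-1}$ of~\eqref{eq:InverseRate}. The starting point is the triangle inequality $\dhell{\mu^{\mathrm{SL}}}{\pi} \le \dhell{\mu^{\mathrm{SL}}}{\pi^{(L)}} + \dhell{\pi^{(L)}}{\pi}$, so it suffices to choose the level $L$ and the integration time $T_{\mathrm{SL}}$ such that each summand is at most $\epsilon/2$.

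For the level selection I would combine~\eqref{eq:HellKLBound} with Assumption~\ref{asm:BiasRate}: demanding $2\dhell{\pi^{(L)}}{\pi}^2 \le \KL{\pi^{(L)}}{\pi} \le k_1 s^{-\alpha L} \le \epsilon^2/2$ fixes $L$ as the smallest integer with $s^{\alpha L} \ge 2k_1/\epsilon^2$, so $L \le \frac{2}{\alpha}\log_s(\sqrt{2k_1}/\epsilon)+1$. Plugging this estimate into Assumption~\ref{asm:Costs} yields the per-unit-time cost bound
\begin{equation*}
c_L \le c_0 s^{\gamma L} \le c_0 s^{\gamma}\left(\frac{\sqrt{2k_1}}{\epsilon}\right)^{2\gamma/\alpha} = c_0 s^{\gamma}(2k_1)^{\gamma/\alpha}\epsilon^{-2\gamma/\alpha}\,,
\end{equation*}
which accounts for the $\epsilon$-dependent prefactor of~\eqref{eq:MLSVGD:SLSVGDCostGeneral} up to the explicit numerical constant absorbed into it from the ceiling in the selection of $L$.

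For the time selection I would invoke~\eqref{eq:HellKLBound} a second time to reduce the requirement $\dhell{\mu_{T_{\mathrm{SL}}}}{\pi^{(L)}} \le \epsilon/2$ to $\KL{\mu_{T_{\mathrm{SL}}}}{\pi^{(L)}} \le \epsilon^2/2$. Applying Assumption~\ref{asm:SVGDRateGeneral} with $\nu_0 = \mu_0$ and $\ell = L$ turns this into the sufficient condition $r(t)\le \epsilon^2/(2\KL{\mu_0}{\pi^{(L)}})$, and by definition~\eqref{eq:InverseRate} the first such time is precisely $r^{-1}\!\left(\epsilon^2/(2\KL{\mu_0}{\pi^{(L)}})\right)$. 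Multiplying the per-unit-time cost estimate by this bound on $T_{\mathrm{SL}}$ produces~\eqref{eq:MLSVGD:SLSVGDCostGeneral}.

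The main subtlety I anticipate is that $r$ is only assumed to be decreasing rather than strictly decreasing or continuous, so its classical inverse need not exist; the minimum-over-a-level-set definition~\eqref{eq:InverseRate} is engineered exactly to handle this, and its well-posedness rests on the boundary conditions $r(0)=1$ and $\lim_{t\to\infty}r(t)=0$ built into Assumption~\ref{asm:SVGDRateGeneral}, which together guarantee that the level set in~\eqref{eq:InverseRate} is nonempty for every argument in $(0,1)$. Beyond this point the derivation is a routine substitution of Assumption~\ref{asm:SVGDRate} by Assumption~\ref{asm:SVGDRateGeneral} in the proof of Proposition~\ref{prop:SVGDComplexity}, so I expect no further analytical obstacle.
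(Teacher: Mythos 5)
Your proposal is correct and follows essentially the same route as the paper's proof: the Hellinger triangle inequality, inequality~\eqref{eq:HellKLBound}, the same choice of level $L$, and the bound $T_{\mathrm{SL}} \le r^{-1}\left( \epsilon^2/(2 \KL{\mu_0}{\pi^{(L)}}) \right)$ obtained from Assumption~\ref{asm:SVGDRateGeneral} via the generalized inverse~\eqref{eq:InverseRate}. Your slightly tighter prefactor $c_0 s^{\gamma}(2k_1)^{\gamma/\alpha}$ immediately implies the stated bound, whose additional factor of $2$ is simply slack.
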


\begin{proof}
    By the triangle inequality for the Hellinger distance we have that
    \[
        \dhell{\mu^{\text{SL}}}{\pi} \le 
        \dhell{\mu^{\text{SL}}}{\pi^{(L)}}
        + \dhell{\pi^{(L)}}{\pi} ,
    \]
    so we will bound both of these terms independently by $\epsilon/2$.  By inequality~\eqref{eq:HellKLBound}, it is sufficient to bound the KL divergence because
    \begin{equation}
        \dhell{\mu^{\text{SL}}}{\pi^{(L)}} \le \sqrt{ \frac{\KL{\mu^{\text{SL}}}{\pi^{(L)}}}{2} },
    \end{equation}
    and similarly for $\dhell{\pi^{(L)}}{\pi}$.  By Assumption~\ref{asm:BiasRate} choose $L$ to be
    \begin{equation}
        L = \left\lceil \frac{1}{\alpha} \log_s \left( \frac{2 k_1}{\epsilon^2} \right) \right \rceil \le  \frac{1}{\alpha} \log_s \left( \frac{2 k_1}{\epsilon^2} \right) + 1,
    \label{eq:highfidlevel}
    \end{equation}
    so that 
    \begin{equation}
        \dhell{\pi^{(L)}}{\pi} \le \sqrt{ \frac{\KL{\pi^{(L)}}{\pi}}{2}} \le \sqrt{ \frac{k_1 s^{-\alpha L}}{2} } \le \frac{\epsilon}{2} .
    \end{equation}
    The time needed to integrate with SVGD to achieve $\dhell{\mu^{\text{SL}}}{\pi^{(L)}} \leq \epsilon/2$ is
    \[
        T_{\mathrm{SL}} = \min \left\{ t \ge 0\ :\ \dhell{\mu_t}{\pi^{(L)}} \le \frac{\epsilon}{2} \right\}\, .
    \]
    Again by inequality~\eqref{eq:HellKLBound},
    \[
        T_{\mathrm{SL}} \le \min \left\{ t\ge 0\ :\ \KL{\mu_t}{\pi^{(L)}} \le \frac{\epsilon^2}{2} \right\}\, .
    \]
    Now by Assumption~\ref{asm:SVGDRateGeneral}, the rate function $r$ is invertible, or by applying the definition~\eqref{eq:InverseRate} of $r^{-1}$, and the time needed to integrate with SVGD to achieve $\dhell{\mu^{\text{SL}}}{\pi^{(L)}} \leq \epsilon/2$ is bounded as
    \begin{equation}
        T_{\mathrm{SL}} \le r^{-1}\left( \frac{\epsilon^2}{2 \KL{\mu_0}{\pi^{(L)}}}  \right) .
    \end{equation}
    With Assumption~\ref{asm:Costs}, the total cost to integrate until time $T_{\mathrm{SL}}$ at level $L$ is therefore bounded as
    \[
        \mathrm{c}_{\mathrm{SL}}(\epsilon) \le c_0 s^{\gamma L} T_{\mathrm{SL}}  \le 2c_0 s^{\gamma} (2k_1)^{\gamma/\alpha}  r^{-1}\left( \frac{\epsilon^2}{2 \KL{\mu_0}{\pi^{(L)}}}  \right) \epsilon^{-2\gamma/\alpha} \, .
    \]
\end{proof}

As in Proposition~\ref{prop:SVGDComplexity} we see that the cost complexity depends on the tolerance $\epsilon$, the KL divergence of the initial distribution $\mu_0$ from the high-fidelity density $\pi^{(L)}$, as well as the SVGD convergence rate.  Because the rate function $r$ is decreasing, its inverse $r^{-1}$ is also decreasing and so a larger initial KL divergence will require a longer integration time.  We also derive a new cost complexity for the more general convergence behavior for  
MLSVGD in the following proposition.

\begin{proposition}
    If Assumptions~\ref{asm:Costs},~\ref{asm:BiasRate},~\ref{asm:MLBiasRate}, and~\ref{asm:SVGDRateGeneral} hold and $R_{\ell} \leq k_3 s^{-\alpha \ell}$, then by setting $\epsilon_{\ell} = \sqrt{2k_1}s^{-\alpha \ell/2}$ for $\ell=1,\ldots,L$, the costs of  
    MLSVGD to have $\dhell{\mu^{\mathrm{ML}}}{\pi} \le \epsilon$ can be bounded as
    \begin{equation}
        c_{\mathrm{ML}}(\epsilon) 
        \le
        \frac{c_0 s^{2\gamma} (2k_1)^{\gamma/\alpha}}{s^{\gamma} - 1}  r^{-1}\left( \frac{1}{s^{\alpha} + (k_2 + k_3)/k_1} \right) \epsilon^{-2\gamma/\alpha}\, .
        \label{eq:MLSVGD:FastCGeneral}
    \end{equation}
    \label{prop:MLSVGD:CostsGeneral}
\end{proposition}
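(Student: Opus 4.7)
The plan is to parallel the proof of Proposition~\ref{prop:MLSVGD:MLSVGDCosts} from \cite{MLSVGD}, performing two systematic substitutions throughout: the exponential factor $\mathrm e^{-\lambda t}$ is replaced by the general rate function $r(t)$ from Assumption~\ref{asm:SVGDRateGeneral}, and $(1/\lambda)\log(\cdot)$ is replaced by $r^{-1}(\cdot)$ as defined in~\eqref{eq:InverseRate}. The only property of $r^{-1}$ the argument exploits is that it is decreasing, inherited from the monotonicity of $r$.

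First, I would determine the top level $L$ and verify that the prescribed tolerances deliver the target Hellinger accuracy. By the triangle inequality for the Hellinger distance combined with the Hellinger--KL bound \eqref{eq:HellKLBound}, the terminal stopping criterion \eqref{eq:epsiloneq} forces $\dhell{\mu^{\mathrm{ML}}}{\pi^{(L)}} \le \epsilon_L/2$, while Assumption~\ref{asm:BiasRate} together with $\epsilon_L = \sqrt{2k_1}s^{-\alpha L/2}$ forces $\dhell{\pi^{(L)}}{\pi} \le \epsilon_L/2$. Demanding $\epsilon_L \le \epsilon$ then selects $L = \left\lceil \frac{1}{\alpha}\log_s\!\left(\frac{2k_1}{\epsilon^2}\right) \right\rceil$, which in turn yields $s^{\gamma L} \le s^\gamma (2k_1)^{\gamma/\alpha}\epsilon^{-2\gamma/\alpha}$.

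The main obstacle is obtaining a bound on each $T_\ell$ that is uniform in $\ell$, so that the geometric sum over levels can be carried out cleanly. Applying Assumption~\ref{asm:SVGDRateGeneral} to the flow at level $\ell$ (target $\pi^{(\ell)}$, initial distribution $\mu^{(\ell-1)}_{T_{\ell-1}}$) and inverting $r$ gives $T_\ell \le r^{-1}\!\left(\epsilon_\ell^2 / (2\KL{\mu^{(\ell-1)}_{T_{\ell-1}}}{\pi^{(\ell)}})\right)$. The key algebraic identity is
\[
    \KL{\mu^{(\ell-1)}_{T_{\ell-1}}}{\pi^{(\ell)}} = \KL{\mu^{(\ell-1)}_{T_{\ell-1}}}{\pi^{(\ell-1)}} + \KL{\pi^{(\ell-1)}}{\pi^{(\ell)}} + R_\ell,
\]
which drops out of the definition~\eqref{eq:ConstCL} of $R_\ell$. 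The stopping rule at level $\ell-1$ bounds the first summand by $k_1 s^{-\alpha(\ell-1)}$, Assumption~\ref{asm:MLBiasRate} bounds the second by $k_2 s^{-\alpha\ell}$, and the hypothesis bounds the third by $k_3 s^{-\alpha\ell}$, so in total $\KL{\mu^{(\ell-1)}_{T_{\ell-1}}}{\pi^{(\ell)}} \le s^{-\alpha\ell}(k_1 s^\alpha + k_2 + k_3)$. Plugging $\epsilon_\ell^2/2 = k_1 s^{-\alpha\ell}$ into the ratio and using that $r^{-1}$ is decreasing produces the $\ell$-independent bound $T_\ell \le r^{-1}\!\left(1/(s^\alpha + (k_2+k_3)/k_1)\right)$.

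Finally, I would sum the per-level costs geometrically via Assumption~\ref{asm:Costs}, giving $\sum_{\ell=1}^L c_\ell T_\ell \le \frac{c_0 s^\gamma}{s^\gamma - 1} s^{\gamma L} r^{-1}\!\left(1/(s^\alpha + (k_2+k_3)/k_1)\right)$, and substitute the bound on $s^{\gamma L}$ from the first step to recover \eqref{eq:MLSVGD:FastCGeneral}. The one boundary subtlety is the base level $\ell=1$, where the three-term KL decomposition is vacuous; there the bound $\KL{\mu_0}{\pi^{(1)}} \le k_0$ from Assumption~\ref{asm:BiasRate} makes $T_1$ a fixed constant (namely $r^{-1}(k_1 s^{-\alpha}/k_0)$), which can be absorbed into the uniform bound above without affecting the $\epsilon^{-2\gamma/\alpha}$ scaling.
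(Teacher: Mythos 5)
Your proposal is correct and follows essentially the same route as the paper's proof: the same choice of $L$, the same exact decomposition $\KL{\mu^{(\ell-1)}_{T_{\ell-1}}}{\pi^{(\ell)}} = \KL{\mu^{(\ell-1)}_{T_{\ell-1}}}{\pi^{(\ell-1)}} + \KL{\pi^{(\ell-1)}}{\pi^{(\ell)}} + R_\ell$ coming from the definition of $R_\ell$, the same $\ell$-independent bound $T_\ell \le r^{-1}\left( 1/(s^{\alpha} + (k_2+k_3)/k_1) \right)$ obtained from the stopping rule at level $\ell-1$ and the monotonicity of $r^{-1}$, and the same geometric summation followed by substituting $s^{\gamma L} \le s^{\gamma}(2k_1)^{\gamma/\alpha}\epsilon^{-2\gamma/\alpha}$. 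Your closing remark about the base level $\ell=1$ is additional care that the paper's proof does not spell out and does not alter the argument.
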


\begin{proof}
    As in Equation~\eqref{eq:highfidlevel} in the proof of Proposition~\ref{prop:SVGDcomplexityGeneral} we select the level $L$ as
    \begin{equation}
        L = \left\lceil \frac{1}{\alpha} \log_s \left( \frac{2 k_1}{\epsilon^2} \right) \right\rceil \le \frac{1}{\alpha} \log_s \left( \frac{2 k_1}{\epsilon^2} \right) + 1,
    \end{equation}
    so that $\dhell{\pi^{(L)}}{\pi} \le \epsilon/2$.
    Note that by setting $\epsilon_{\ell} = \sqrt{2k_1} s^{-\alpha \ell / 2}$ for $\ell=1,\ldots,L$ in~\eqref{eq:epsiloneq} we have that
    \[
        \frac{\epsilon_L^2}{2} = k_1 s^{-\alpha L} \le \frac{\epsilon}{2} \, ,
    \]
    by the choice of the high-fidelity level $L$~\eqref{eq:highfidlevel}.

    By Assumption~\ref{asm:Costs}, the total cost for MLSVGD is bounded by
    \begin{equation}
        c_{\mathrm{ML}}(\epsilon) \le \sum_{\ell=1}^L c_0 s^{\gamma \ell} T_{\ell}\, ,
    \end{equation}
    where it remains to bound the integration times $T_{\ell}$ at each level. By Assumption~\ref{asm:SVGDRateGeneral} and Equation~\eqref{eq:recursive}, we have
    \begin{equation}
    \begin{split}
        \KL{\mu^{(\ell)}_{T_{\ell}}}{ \pi^{(\ell)}} 
        &\le 
        r(T_{\ell}) \KL{\mu_{T_{\ell-1}}^{(\ell-1)}}{\pi^{(\ell)}} \\
        &=
        r( T_{\ell} )
        \left( \KL{\mu^{(\ell-1)}_{T_{\ell-1}}}{ \pi^{(\ell-1)}} 
        +  \KL{\pi^{(\ell - 1)}}{ \pi^{(\ell)} } + R_{\ell}  \right)\, ,
    \end{split}
    \label{eq:recursive}
    \end{equation}
    giving a recursive bound on the KL divergence in terms of the KL divergence at the previous level.  By the definition~\eqref{eq:epsiloneq} of the integration times $T_{\ell}$ at level $\ell$, we know that
    \begin{equation}
        \KL{\mu^{(\ell)}_{T_{\ell}}}{ \pi^{(\ell)}} 
        \le
        \frac{\epsilon_{\ell}^2}{2}\, , 
    \label{eq:KLBoundEpsilon}
    \end{equation}
    is satisfied for each level $\ell=1,\ldots,L$.  Using~\eqref{eq:KLBoundEpsilon} at level $\ell-1$ gives
    \begin{equation}
        \KL{\mu^{(\ell)}_{T_{\ell}}}{ \pi^{(\ell)}} 
        \le
        r(T_{\ell}) \left( \frac{\epsilon_{\ell-1}^2}{2} + \KL{\pi^{(\ell - 1)}}{ \pi^{(\ell)} }  + R_{\ell} \right) \, .
    \label{eq:KLBound1}
    \end{equation}
    Note that by~\eqref{eq:KLBoundEpsilon} we know that the left-hand-side of~\eqref{eq:KLBound1} is guaranteed to be bounded above by $\epsilon_{\ell}^2/2$, but the same is not necessarily true for the right-hand-side which is an upper bound.  Instead define $T_{\ell}^{\prime}$ as
    \begin{equation}
        T_{\ell}^{\prime} 
        =
        \min \left\{t \ge 0\ :\ r(t) \left( \frac{\epsilon_{\ell-1}^2}{2} + \KL{\pi^{(\ell - 1)}}{ \pi^{(\ell)} } + R_{\ell} \right)  \le \frac{\epsilon_{\ell}^2}{2} \right\}\, ,
    \label{eq:TBoundAlt}
    \end{equation}
    for each level $\ell=1,\ldots,L$, which is finite by the assumption that $r(t) \to 0$ (Assumption~\ref{asm:SVGDRateGeneral}).  By~\eqref{eq:KLBound1} and because $r$ is monotonically decreasing we know that $T_{\ell} \le T_{\ell}^{\prime}$.  Solving directly gives
    \begin{equation}
        T_{\ell}^{\prime} 
        \le
        r^{-1}\left( \frac{\epsilon_{\ell}^2}{\epsilon_{\ell-1}^2 + 2 \KL{\pi^{(\ell - 1)}}{ \pi^{(\ell)} }  + 2 R_{\ell} } \right) \, .
    \label{eq:TBoundAlt1}
    \end{equation}
    We now use the fact that $r^{-1}$ is decreasing as well as Assumption~\ref{asm:MLBiasRate} and the assumption that $R_{\ell} \le k_3 s^{-\alpha \ell}$ to bound
    \begin{equation*}
        r^{-1}\left( \frac{\epsilon_{\ell}^2}{\epsilon_{\ell-1}^2 + 2 \KL{\pi^{(\ell - 1)}}{ \pi^{(\ell)} }  + 2 R_{\ell} } \right) \le r^{-1}\left( \frac{\epsilon_{\ell}^2}{\epsilon_{\ell-1}^2 + 2k_2 s^{-\alpha \ell} + 2 k_3 s^{-\alpha \ell}  } \right)\, .
    \end{equation*}
    Therefore, by substituting $\epsilon_{\ell} = \sqrt{2k_1} s^{-\alpha \ell/2}$ (and similarly for $\epsilon_{\ell-1}$) we can bound $T_{\ell}^{\prime}$, and hence $T_{\ell}$, with
    \begin{equation*}
        T_{\ell} \le r^{-1}\left( \frac{ 2k_1s^{-\alpha \ell}  }{ 2k_1 s^{\alpha} s^{-\alpha \ell} + 2k_2 s^{-\alpha \ell} + 2 k_3 s^{-\alpha \ell}  } \right)\, .
    \end{equation*}
    Simplifying gives the bound
    \begin{equation}
        T_{\ell} \le r^{-1}\left( \frac{1}{s^{\alpha} + (k_2 + k_3)/k_1} \right) \, ,
    \label{eq:TBoundAlt3}
    \end{equation}
    which is independent of the tolerance $\epsilon$.  The total cost can now be bounded by
    \begin{equation}
        c_{\mathrm{ML}}(\epsilon) 
        \le
        \sum_{\ell=1}^L c_0 s^{\gamma \ell}  r^{-1}\left( \frac{1}{s^{\alpha} + (k_2 + k_3)/k_1} \right) \, ,
    \label{eq:CostBoundAltNew}
    \end{equation}
    which we may again compute explicitly
    \begin{equation}
    \begin{split}
        c_{\mathrm{ML}}(\epsilon) 
        &\le
        c_0 s^{\gamma} r^{-1}\left( \frac{1}{s^{\alpha} + (k_2 + k_3)/k_1} \right) \frac{s^{\gamma L} - 1}{s^{\gamma} - 1}\\
        &\le
        c_0 s^{\gamma} r^{-1}\left( \frac{1}{s^{\alpha} + (k_2 + k_3)/k_1} \right) \frac{s^{\gamma L}}{s^{\gamma} - 1}
        \, ,
    \end{split}
    \end{equation}
    and we have again added 1 in the numerator of the last term for convenience.  Substituting the upper bound~\eqref{eq:highfidlevel} on the level $L$ and simplifying terms gives the final upper bound on the improved cost complexity of the  
    MLSVGD approximation $\mu^{\mathrm{ML}}$
    \begin{equation}
        c_{\mathrm{ML}}(\epsilon) 
        \le
        \frac{c_0 s^{2\gamma} (2k_1)^{\gamma/\alpha}}{s^{\gamma} - 1}  r^{-1}\left( \frac{1}{s^{\alpha} + (k_2 + k_3)/k_1} \right) \epsilon^{-2\gamma/\alpha} \, .
    \label{eq:MLSVGDCostUpperBoundNew}
    \end{equation}
\end{proof}

By setting $r(t) = \mathrm{e}^{-\lambda t}$ as in Assumption~\ref{asm:SVGDRate}, one can recover the cost complexities stated in Section~\ref{sec:2p3}.   
When compared to \eqref{eq:MLSVGD:SLSVGDCostGeneral}, if SVGD is slow to converge then the  
MLSVGD can spend most of the integration time at the lower levels, which can be faster to integrate, in order to find a good initial density for integrating with respect to the highest level $L$ and so potentially achieve speedups.  In contrast, if SVGD converges quickly then the low-fidelity densities will be less beneficial and both costs will be comparable.

\subsection{Cost complexity for Bayesian inverse problems}
\label{sec:3p3}
The results from Section~\ref{sec:3p1}  
are applicable in Bayesian inverse problem settings. Recall that typically in Bayesian inverse problems, the sequence of posterior distributions $(\pi^{(\ell)})$ is obtained via a sequence of approximate parameter-to-observable maps $(G^{(\ell)})_{\ell=1}^{\infty}$ with $G^{(\ell)}(\btheta) \to G(\btheta)$ pointwise for every $\btheta \in \Theta$, so that the sequence of densities $(\pi^{(\ell)})$ converges pointwise as well. As shown in \cite{MLSVGD}, the following assumption on the parameter-to-observable maps ensures that the KL divergences of the densities converges as required in Assumptions~\ref{asm:BiasRate} and ~\ref{asm:MLBiasRate}.

    \begin{assumption}
    The error of the approximate parameter-to-observable $G^{(\ell)}$ map at level $\ell \ge 1$ is bounded by
    \begin{equation}
        \norm{G(\btheta) - G^{(\ell)}(\btheta)}_{L^2(\pi_0)} \le b_0s^{-\alpha \ell} \, ,
    \label{eq:modelerror}
    \end{equation}
    where $\alpha,b_0 > 0$ and $s > 1$ are constants with $s$ the same as in Assumption~\ref{asm:Costs} and $\norm{\cdot}_{L^2(\pi_0)}$ is the $L^2$ norm over $\pi_0$.
    \label{asm:modelerror}
    \end{assumption}
    
As long as the SVGD approximations $\mu_{T_{\ell}}^{(\ell)}$ remain absolutely continuous at each level with respect to the prior density $\pi_0$, the remainders $R_{\ell}$ defined in \eqref{eq:ConstCL} can be bounded and thus the cost bound with the same rate as in Proposition~\ref{prop:MLSVGD:CostsGeneral} applies for approximating the Bayesian posterior; see \cite[Theorem~1]{MLSVGD}. We now state this result formally.

\begin{proposition}
Let Assumptions~\ref{asm:Costs},~\ref{asm:SVGDRateGeneral},and~\ref{asm:modelerror} hold. Furthermore, assume that there exists a constant $b_3 > 0$ independent of $\ell$ such that
    \begin{equation} \label{eq:CostBIP:AsmCover}\mu^{(\ell)}_{T_{\ell}}(\btheta) \le b_3 \pi_0(\btheta)\, ,\quad \btheta \in \Theta \, ,
    \end{equation}
    for all $\ell \ge 1$.
    Then, the cost complexity of finding $\mu^{\mathrm{ML}}$ with $\dhell{\mu^{\mathrm{ML}}}{ \pi} \le \epsilon$ is 
    \begin{equation}\label{eq:BIPCost:Bound}
        c_{\mathrm{ML}}(\epsilon) 
        \le
        \frac{c_0 s^{2 \gamma} (3b_1b_2b_0)^{\gamma/\alpha}}{s^{\gamma} - 1}
        r^{-1}\left( \frac{1}{s^{\alpha} + (1 + s^{\alpha})(4 + 3b_3/b_2) } \right) 
        \epsilon^{-2\gamma/\alpha}\, ,
    \end{equation}
    where the constants $b_1,b_2$ are independent of $\epsilon$. 
    \label{thm:BayesCostComp}
\end{proposition}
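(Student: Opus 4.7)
The plan is to reduce Proposition~\ref{thm:BayesCostComp} to Proposition~\ref{prop:MLSVGD:CostsGeneral} by verifying, in the Bayesian inverse problem setting, the two missing KL hypotheses (Assumptions~\ref{asm:BiasRate} and~\ref{asm:MLBiasRate}) and the remainder bound $R_\ell \le k_3 s^{-\alpha\ell}$. Once those are in place, the result follows by plugging the resulting constants $k_1,k_2,k_3$ into~\eqref{eq:MLSVGD:FastCGeneral} and collecting terms to match the claimed form of~\eqref{eq:BIPCost:Bound}.

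The first step is to translate the $L^2$ model-error bound~\eqref{eq:modelerror} into pointwise and integrated bounds on log-density differences. Writing
\[
\log\frac{\pi^{(\ell)}(\btheta)}{\pi(\btheta)} = \tfrac{1}{2}\|\by-G(\btheta)\|_{\Gamma^{-1}}^2 - \tfrac{1}{2}\|\by-G^{(\ell)}(\btheta)\|_{\Gamma^{-1}}^2 + \log\frac{Z}{Z^{(\ell)}},
\]
the polarization identity $\|a\|^2_{\Gamma^{-1}} - \|b\|^2_{\Gamma^{-1}} = \inner{a-b}{\Gamma^{-1}(a+b)}$ combined with Cauchy-Schwarz controls this difference in absolute value by a constant multiple of $\|G(\btheta)-G^{(\ell)}(\btheta)\|_{\Gamma^{-1}}$ times the ``envelope'' $\|2\by - G(\btheta) - G^{(\ell)}(\btheta)\|_{\Gamma^{-1}}$. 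Integrating against $\pi_0$ and invoking~\eqref{eq:modelerror} yields $\KL{\pi^{(\ell)}}{\pi}\le k_1 s^{-\alpha\ell}$, i.e.\ Assumption~\ref{asm:BiasRate}. The same calculation with $\pi^{(\ell-1)}$ in place of $\pi$ (and $G^{(\ell-1)}$ in place of $G$, noting $\|G^{(\ell-1)}-G^{(\ell)}\|_{L^2(\pi_0)}\le 2 b_0 s^{-\alpha\ell}$ by the triangle inequality) delivers Assumption~\ref{asm:MLBiasRate} with $k_2$ expressible in terms of $b_0$ and $\|\Gamma^{-1}\|$; the normalization-constant ratio $Z^{(\ell-1)}/Z^{(\ell)}$ is absorbed by the same pointwise bound via Jensen.

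The second step is to bound the remainder $R_\ell$ of~\eqref{eq:ConstCL}. Using the coverage condition~\eqref{eq:CostBIP:AsmCover} together with the fact that $\pi^{(\ell-1)}$ is itself dominated by a constant multiple of $\pi_0$ (a consequence of the likelihood being bounded when integrated against the prior), the integrand is controlled by $(b_3 + \mathrm{const})\,\pi_0(\btheta)\,|\log(\pi^{(\ell-1)}/\pi^{(\ell)})(\btheta)|$. Integrating the pointwise bound from the first step against $\pi_0$ then yields $R_\ell\le k_3 s^{-\alpha\ell}$ with $k_3$ proportional to $(1+b_3)b_0$. This is the place where~\eqref{eq:CostBIP:AsmCover} is essential, since without it the envelope factor $\|2\by - G - G^{(\ell-1)}\|_{\Gamma^{-1}}$ need not be controlled uniformly in $\btheta$ against the measure $\mu^{(\ell-1)}_{T_{\ell-1}}$.

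Finally, substituting $k_1,k_2,k_3$ into~\eqref{eq:MLSVGD:FastCGeneral} and simplifying gives the stated bound~\eqref{eq:BIPCost:Bound}, where $b_1$ and $b_2$ are the absolute constants (involving $\|\Gamma^{-1}\|$ and the envelope bound) that arise in the $L^2$-to-KL passage; the numerical factors $3$ and $(1+s^{\alpha})(4+3b_3/b_2)$ come out of the algebra once one identifies $2k_1 \leftrightarrow 3 b_1 b_2 b_0$ and $(k_2+k_3)/k_1$ with the term appearing inside $r^{-1}$. The main obstacle is bookkeeping: the argument is almost verbatim the proof of Theorem 1 in~\cite{MLSVGD}, with the exponential factor $\mathrm e^{-\lambda t}$ replaced everywhere by the general rate $r(t)$ of Assumption~\ref{asm:SVGDRateGeneral}, and care must be taken that the constants hidden in the $L^2$-to-KL step do not depend on $\ell$ — which is precisely where~\eqref{eq:CostBIP:AsmCover} is used.
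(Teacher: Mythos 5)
Your proposal follows essentially the same route as the paper's proof: verify the KL bias bounds of Assumptions~\ref{asm:BiasRate} and~\ref{asm:MLBiasRate} (the paper simply invokes Lemmas~7--8 of \cite{MLSVGD}, giving $k_1=\tfrac{3}{2}b_0b_1b_2$ and $k_2=\tfrac{3}{2}b_0b_1b_2(1+s^{\alpha})$), bound the remainder $R_{\ell}$ by dropping the normalization constants, dominating $\mu^{(\ell-1)}_{T_{\ell-1}}$ by $b_3\pi_0$ via \eqref{eq:CostBIP:AsmCover} and $\pi^{(\ell-1)}$ by $b_2\pi_0$, and controlling the difference of squared misfits through polarization and Cauchy--Schwarz in $L^2(\pi_0)$, before plugging $k_1,k_2,k_3$ into Proposition~\ref{prop:MLSVGD:CostsGeneral}. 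The only slip is a constant: the triangle inequality gives $\norm{G^{(\ell-1)}-G^{(\ell)}}_{L^2(\pi_0)}\le b_0(1+s^{\alpha})s^{-\alpha\ell}$ rather than $2b_0 s^{-\alpha\ell}$ (since $s^{\alpha}>1$), and this $(1+s^{\alpha})$ factor is precisely the one appearing in $k_2$, $k_3$, and the final bound.
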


\begin{proof}
    Because Assumption~\ref{asm:modelerror} holds, by \cite[Lemmas~7 and 8]{MLSVGD} we know that
    Assumptions~\ref{asm:BiasRate} and~\ref{asm:MLBiasRate} hold with $k_1 = \frac{3}{2}b_0b_1b_2$ and $k_2 = \frac{3}{2}b_0b_1b_2(1 + s^{\alpha})$. 
    Here the constants $b_1,b_2$ are such that
    \begin{equation}
        b_1 \ge \norm{\Gamma^{-1}\left( 2\by - G^{(\ell_1)} - G^{(\ell_2)}\right)}_{L^2(\pi_0)} \, ,
    \label{eq:b1}
    \end{equation}
    for all $\ell_1,\ell_2 \ge 1$ and
    \begin{equation}
        b_2 \ge \sup_{\ell \ge 1} \frac{1}{Z_{\ell}} \, .
    \label{eq:b2}
    \end{equation}
    Thus, we just need to verify that $R_{\ell} \le k_3 s^{-\alpha \ell}$ holds for a constant $k_3$ to apply Proposition~\ref{prop:MLSVGD:CostsGeneral}. 
    Using definitions given in \eqref{eq:Prelim:TargetPDF} and \eqref{eq:ConstCL}, we make the following transformations
    \begin{equation}\label{eq:BIP:ProofOne}
    \begin{split}
        R_{\ell} 
        &= 
        \int_{\Theta} \left( \mu^{(\ell-1)}_{T_{\ell-1}}(\btheta) - \pi^{(\ell - 1)}(\btheta) \right) \log \left( \frac{\pi^{(\ell-1)}(\btheta)}{\pi^{(\ell)}(\btheta)} \right)\ \mathrm{d}\btheta       \\
        &=  
        \int_{\Theta} \left( \mu^{(\ell-1)}_{T_{\ell-1}}(\btheta) - \pi^{(\ell - 1)}(\btheta) \right) \log \left(  \frac{Z_{\ell} \exp\left( -\frac{1}{2}\|\by - G^{(\ell-1)}(\btheta)\|_{\Gamma^{-1}}^2 \right)}{Z_{\ell - 1} \exp\left( -\frac{1}{2}\|\by - G^{(\ell)}(\btheta)\|_{\Gamma^{-1}}^2 \right) }  \right) \ \mathrm{d}\btheta \\
        &=  
        \int_{\Theta} \left( \mu^{(\ell-1)}_{T_{\ell-1}}(\btheta) - \pi^{(\ell - 1)}(\btheta) \right) \log \left(  \frac{ \exp\left( -\frac{1}{2} \|\by - G^{(\ell-1)}(\btheta) \|_{\Gamma^{-1}}^2 \right)}{ \exp\left( -\frac{1}{2}\|\by - G^{(\ell)}(\btheta)\|_{\Gamma^{-1}}^2 \right) }  \right) \ \mathrm{d}\btheta  ,
    \end{split}
    \end{equation}
    where $Z_{\ell}$ and $Z_{\ell-1}$ are the normalizing constants of $\pi^{(\ell)}$ and $\pi^{(\ell-1)}$, respectively, so that
    \begin{equation}
        Z_{\ell} = \int_{\Theta} \exp\left( -\frac{1}{2}\|\by - G^{(\ell)}(\btheta)\|_{\Gamma^{-1}}^2 \right) \pi_0(\btheta)\ \mathrm{d}\btheta \, .
    \label{eq:zell}
    \end{equation}
    The last line of \eqref{eq:BIP:ProofOne} follows from the fact that
    \begin{equation}
         \int_{\Theta} \left( \mu^{(\ell-1)}_{T_{\ell-1}}(\btheta) - \pi^{(\ell - 1)}(\btheta) \right) \log \left(  \frac{Z_{\ell} }{Z_{\ell - 1}  }  \right) \ \mathrm{d}\btheta = 0
    \end{equation}
    since $\frac{Z_{\ell} }{Z_{\ell - 1}  }$ is constant in $\btheta$ and $\pi^{(\ell - 1)}$ and $\mu^{(\ell-1)}_{T_{\ell-1}}$ both integrate to one.  Simplifying the expression for $R_{\ell}$ gives
    \begin{equation}
        R_{\ell} = \frac{1}{2}\int_{\Theta} \left( \mu^{(\ell-1)}_{T_{\ell-1}}(\btheta) - \pi^{(\ell - 1)}(\btheta) \right)  \left( \|\by - G^{(\ell)}(\btheta)\|_{\Gamma^{-1}}^2 - \|\by - G^{(\ell-1)}(\btheta) \|_{\Gamma^{-1}}^2 \right) \ \mathrm{d}\btheta \, .
    \end{equation}
    By taking the absolute value and applying the triangle inequality we have that
    \begin{equation}
    \begin{split}
        R_{\ell} &\le \frac{1}{2} \int_{\Theta} \left| \| \by - G^{(\ell)}(\btheta)\|_{\Gamma^{-1}}^2 - \| \by - G^{(\ell-1)}(\btheta)\|_{\Gamma^{-1}}^2  \right|  \mu^{(\ell-1)}_{T_{\ell-1}}(\btheta)  \ \mathrm{d}\btheta  \\
        &\quad + \frac{1}{2} \int_{\Theta} \left| \| \by - G^{(\ell)}(\btheta)\|_{\Gamma^{-1}}^2 - \| \by - G^{(\ell-1)}(\btheta)\|_{\Gamma^{-1}}^2  \right|  \pi^{(\ell-1)}(\btheta)  \ \mathrm{d}\btheta\, .
    \end{split}    
    \end{equation}
    Additionally, since
    \[
        \exp\left( -\frac{1}{2}\|\by - G^{(\ell)}(\btheta)\|_{\Gamma^{-1}}^2 \right) \le 1 \, ,
    \]
    we have
    \begin{equation}
        \pi^{(\ell)}(\btheta) = \frac{1}{Z_{\ell}}\exp\left( -\frac{1}{2}\|\by - G^{(\ell)}(\btheta)\|_{\Gamma^{-1}}^2 \right) \pi_0(\btheta) \le \frac{1}{Z_{\ell}}\pi_0(\btheta) \, .
    \label{eq:piboundzell}
    \end{equation}
    Therefore, by combining~\eqref{eq:piboundzell} for $\pi^{(\ell-1)}(\btheta) \le \pi_0(\btheta)/Z_{\ell-1}$
    with \eqref{eq:CostBIP:AsmCover} we get 
        \begin{equation}
    \begin{split}
        R_{\ell} &\le \frac{1}{2} \int_{\Theta} \left| \| \by - G^{(\ell)}(\btheta)\|_{\Gamma^{-1}}^2 - \| \by - G^{(\ell-1)}(\btheta)\|_{\Gamma^{-1}}^2  \right|  \mu^{(\ell-1)}_{T_{\ell-1}}(\btheta)  \ \mathrm{d}\btheta  \\
        &\quad + \frac{1}{2} \int_{\Theta} \left| \| \by - G^{(\ell)}(\btheta)\|_{\Gamma^{-1}}^2 - \| \by - G^{(\ell-1)}(\btheta)\|_{\Gamma^{-1}}^2  \right|  \pi^{(\ell-1)}(\btheta)  \ \mathrm{d}\btheta \\
        &\le \frac{b_3}{2} \int_{\Theta} \left| \| \by - G^{(\ell)}(\btheta)\|_{\Gamma^{-1}}^2 - \| \by - G^{(\ell-1)}(\btheta)\|_{\Gamma^{-1}}^2  \right|  \pi_{0}(\btheta)  \ \mathrm{d}\btheta  \\
        &\quad + \frac{1}{2 Z_{\ell - 1}} \int_{\Theta} 
        \left| \| \by - G^{(\ell)}(\btheta)\|_{\Gamma^{-1}}^2 - \| \by - G^{(\ell-1)}(\btheta)\|_{\Gamma^{-1}}^2  \right|  \pi_{0}(\btheta)  \ \mathrm{d}\btheta \, .
    \end{split}
    \end{equation}
    Re-writing the expression inside the absolute value in the integrand gives
    \begin{equation}
    \begin{split}
    & \| \by - G^{(\ell)}(\btheta)\|_{\Gamma^{-1}}^2 - \| \by - G^{(\ell-1)}(\btheta)\|_{\Gamma^{-1}}^2   \\
        &=
    \norm{G^{(\ell)}(\btheta) - G^{(\ell-1)}(\btheta) + G^{(\ell-1)}(\btheta)-\by}_{\Gamma^{-1}}^2 - \| \by - G^{(\ell-1)}(\btheta)\|_{\Gamma^{-1}}^2\\
    &=
    \inner{G^{(\ell)}(\btheta) - G^{(\ell-1)}(\btheta)}{\Gamma^{-1}(G^{(\ell)}(\btheta) - G^{(\ell-1)}(\btheta))} \\&\qquad\qquad\qquad
    + 2 \inner{G^{(\ell)}(\btheta) - G^{(\ell-1)}(\btheta)}{\Gamma^{-1}(G^{(\ell-1)}(\btheta) - \by)}\\
    &= 
    \inner{G^{(\ell)}(\btheta) - G^{(\ell-1)}(\btheta)}{\Gamma^{-1}(G^{(\ell)}(\btheta) + G^{(\ell-1)}(\btheta) - 2\by)}\\
    &\le 
    \norm{G^{(\ell)}(\btheta) - G^{(\ell-1)}(\btheta)} \cdot \norm{\Gamma^{-1}(2\by - G^{(\ell)}(\btheta) - G^{(\ell-1)(\btheta)})} \, , 
    \end{split}
    \end{equation}
    where we have applied the Cauchy-Schwarz inequality to obtain the last line.
    From the Cauchy-Schwarz inequality on $L^2(\pi_0)$
    \begin{equation}
    \begin{split}
    &\int_{\Theta} 
        \left| \| \by - G^{(\ell)}(\btheta)\|_{\Gamma^{-1}}^2 - \| \by - G^{(\ell-1)}(\btheta)\|_{\Gamma^{-1}}^2  \right|  \pi_{0}(\btheta)  \ \mathrm{d}\btheta \\
    &\le
        \int_{\Theta} \norm{G^{(\ell)}(\btheta) - G^{(\ell-1)}(\btheta)} \cdot \norm{\Gamma^{-1}(2\by - G^{(\ell)}(\btheta) - G^{(\ell-1)(\btheta)})} \pi_0(\btheta)\ \mathrm{d}\btheta \\
        &\le
        \norm{G^{(\ell)} - G^{(\ell-1)}}_{L^2(\pi_0)} \cdot \norm{\Gamma^{-1}(2\by - G^{(\ell)} - G^{(\ell-1)})}_{L^2(\pi_0)} \\
        &\le
        b_1 \norm{G^{(\ell)} - G^{(\ell-1)}}_{L^2(\pi_0)} \\
        &\le
        b_1\norm{G - G^{(\ell)}}_{L^2(\pi_0)} + b_1\norm{G - G^{(\ell-1)}}_{L^2(\pi_0)}\, .
    \end{split}
    \label{eq:CSproof}
    \end{equation}
    From Assumption~\ref{asm:modelerror} we have
    \[
        b_1\norm{G - G^{(\ell)}}_{L^2(\pi_0)} + b_1\norm{G - G^{(\ell-1)}}_{L^2(\pi_0)} \le b_0b_1s^{-\alpha \ell} (1 + s^{\alpha}) \, ,
    \]
    and therefore
    \begin{equation}
        R_{\ell} \le \frac{1}{2}b_0b_1 (1 + s^{\alpha})\left(b_3 + b_2\right)s^{-\alpha \ell}  ,
    \end{equation}
    so that $R_{\ell} \le k_3 s^{-\alpha \ell}$ with $k_3 = \frac{b_0 b_1}{2}\left(b_2 + b_3\right)(1 + s^{\alpha})$. Thus, Proposition~\ref{prop:MLSVGD:CostsGeneral} applies and gives the bound \eqref{eq:BIPCost:Bound}. 
    \end{proof}

\section{Numerical example: Ice sheet modeling of the Arolla glacier}\label{sec:NumExp}
To demonstrate the applicability and performance of MLSVGD, we formulate and solve an inverse problem governed by a Stokes ice sheet model. In particular, we infer the basal sliding coefficient field 
from pointwise surface velocity observations. The problem formulation and adjoint-based derivatives computation follows the work in~\cite{Noemi2014}. The numerical computations are carried out in Python using FEniCS~\cite{fenics} and hIPPYlib \cite{VillaPetraGhattas16,VillaPetraGhattas18,VillaPetraGhattas21,https://doi.org/10.48550/arxiv.2112.00713}.
All reported runtimes were measured on Intel Xeon Platinum 8268 24C 205W 2.9GHz Processor.  The computation of the gradients $\nabla \log \pi^{(\ell)}(\btheta^{[j]}_t)$ was parallelized over 32 cores.

\subsection{Nonlinear Stokes forward model}

For the numerical studies, we use an ice sheet model problem that uses the Arolla (Haut Glacier d’Arolla) geometry and setup from the ISMIP-HOM benchmark collection~\cite{ISMIP-HOM}. That is, the glacier is considered a sliding mass of ice whose velocity is determined primarily by the force of gravity and the friction against the underlying rock.  The ice flow is modeled as a non-Newtonian, viscous, incompressible fluid.  The velocity field $\bfu$ over the domain $\Omega \subset \mathbb{R}^2$, as shown in Figure~\ref{fig:arolla_domain}, is governed by the following Stokes equations
\begin{equation}
\begin{split}
\nabla \cdot {\boldsymbol u} = 0, &\quad \text{ in } \Omega\, ,\\
-\nabla \cdot {\boldsymbol \sigma_u} = \rho {\boldsymbol g}, & \quad \text{ in } \Omega \, .
\end{split}
\label{eq:arolla_pde}
\end{equation}
The boundary conditions along the top and bottom of the glacier are given as 
\begin{equation}
\begin{split}
\boldsymbol{n}^{\top}\left({\boldsymbol \sigma_u} {\boldsymbol n}  + \omega {\boldsymbol u}\right)=0, &\quad \text{ on } \Gamma_b \, ,\\
{\bf T} {\boldsymbol \sigma_u} {\boldsymbol n}  + \exp(\beta) {\bf T}{\boldsymbol u}  = {\boldsymbol 0}, &\quad \text{ on } \Gamma_b \, , \\
{\boldsymbol \sigma_u} {\boldsymbol n} = {\boldsymbol 0}, &\quad \text{ on } \Gamma_t \, .
\end{split}
\label{eq:arolla_bc}
\end{equation}
The density of the ice is $\rho= 910\  [\mathrm{kg}/\mathrm{m}^3]$ and the downwards gravitational force is ${\boldsymbol g} = (0, -9.81)\ [\mathrm{m}/\mathrm{s}^2]$.  For the boundary conditions, $\Gamma_b$ represents the bottom part of the domain where the ice slides across the bedrock and $\Gamma_t$ represents the top part of the domain; see Figure~\ref{fig:arolla_domain}.  The vector ${\boldsymbol n}$ represents the outward unit normal vector and ${\bf T} = \mathbf{I} - {\boldsymbol n}  {\boldsymbol n}^{\top}$ is the tangential projection.  In the first boundary condition where $\boldsymbol{n}^{\top}\left({\boldsymbol \sigma_u} {\boldsymbol n}  + \omega {\boldsymbol u}\right)=0$ on $\Gamma_b$ we set the parameter $\omega=10^{6}$ and is meant to approximate the no out-flow condition ${\boldsymbol u} \cdot {\boldsymbol n} = 0$, which is difficult to enforce directly due to the curvature of the domain $\Omega$.  The stress tensor is
    \[
        \bsigma_{\bu} = \btau_{\bu} - \eye p,
    \]
    with pressure $p$ and deviatoric stress tensor
    \[
        \btau_{\bu} = 2\eta(\bu) \dot{\beps}(\bu) \, ,
    \]
    with effective viscosity
    \[
        \eta(\bu) = \frac{1}{2}A^{-\frac{1}{n}} \dot{\beps}_{\Romanbar{II}}^{\frac{1 - n}{2n}} \, .
    \]
    The constants are Glen's flow law exponent $n = 3$ and the flow rate factor $A = 10^{-16}\  [\mathrm{Pa}^{-n}\mathrm{a}^{-1}]$ (Pascals and years, respectively).  The strain rate tensor is
    \[
        \dot{\beps} = \frac{1}{2}\left( \nabla \bu + \nabla \bu^{\top}\right) \, ,
    \]
    as well as the second invariant 
    \[
        \dot{\beps}_{\Romanbar{II}} = \frac{1}{2}\mathrm{tr}\left( \dot{\beps}_{\bu}^2 \right) \,,
    \]
where $\operatorname{tr}$ denotes the trace operator. 
The parameter of interest is the log basal sliding coefficient field $\beta:[0, 5000] \to \mathbb{R}$, which models the friction of the ice sheet across the underlying bedrock and relates tangential traction to the tangential velocity.  

To solve~\eqref{eq:arolla_pde}, we discretize~\eqref{eq:arolla_pde}--\eqref{eq:arolla_bc} using Taylor-Hood finite elements on a triangular mesh where the velocity is discretized with quadratic Lagrange elements and the pressure is discretized with linear Lagrange elements.  We consider one high-fidelity model and two low-fidelity models by coarsening the mesh.  The high-fidelity forward model $F^{(3)}$ ($L=3$) maps the log basal sliding coefficient field $\beta$ to the velocity field solution $\bu$ using 3,602 and 501 degrees of freedom for the velocity and pressure components, respectively.  The coarsest low-fidelity model $F^{(1)}$ uses 448 and 73 degrees of freedom for the velocity and pressure and the second low-fidelity model $F^{(2)}$ uses 1002 and 151 degrees of freedom, respectively.  To solve the discretized PDE we use a constrained Newton solver with the gradient tolerance set to $10^{-6}$.

\begin{figure}
    \centering
    \input{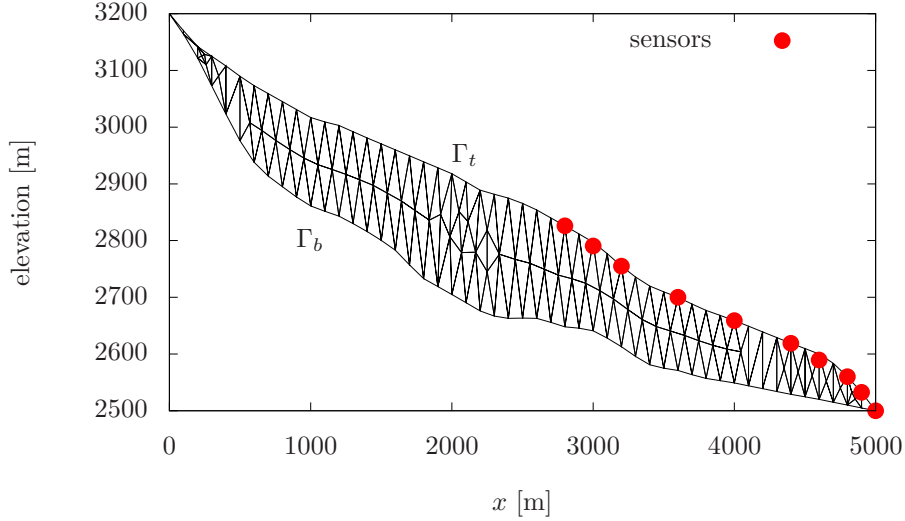}
    \caption{The domain $\Omega$ of Haut Glacier d’Arolla from the ISMIP-HOM benchmark collection~\cite{ISMIP-HOM}. The red dots represent the location of the measurements.}
    \label{fig:arolla_domain}
\end{figure}

\subsection{Problem setup}

We are interested in inferring a discretized log basal sliding coefficient field $\beta$, which effectively determines the velocity of the ice as it slides along the bedrock. We discretize the coefficient field $\beta:[0,5000] \to \mathbb{R}$ with a vector ${\boldsymbol \beta} \in \mathbb{R}^d$ ($d = 25$) that we aim to infer from data of the parameter-to-observable map.   The parameter vector ${\boldsymbol \beta} \in \mathbb{R}^{25}$ corresponds to $25$ equally-spaced pointwise evaluations of the coefficient field $\beta$ throughout the domain $[0,5000]$.  In particular, let $\mathcal{I}^{\mathrm{int}}$ denote the interpolation operator that maps a vector ${\boldsymbol \beta} \in \mathbb{R}^{25}$ to its piecewise linear interpolant $\bar{\beta}:[0,5000] \to \mathbb{R}$ defined at the nodes $x_i = 5000(i-1)/24$ by $\bar{\beta}(x_i) = {\boldsymbol \beta}_i$ for $i=1,\ldots,25$.  Given the piecewise linear interpolant $\bar{\beta}$, the forward models $F^{(\ell)}$ for $\ell=1,2,3$ map the parameter to the corresponding velocity field $\bu$.  Finally, the observation operator $\mathcal{B}^{\mathrm{obs}}$ maps the solution $\bu$ of~\eqref{eq:arolla_pde}, given by the output of the forward models $F^{(\ell)}$, to a 20 dimensional vector of horizontal and vertical velocity measurements at 10 sensor locations throughout the right side of the domain along the top of the glacier as shown in Figure~\ref{fig:arolla_domain}.  The full parameter-to-observable map $G^{(\ell)}:\mathbb{R}^{25} \to \mathbb{R}^{20}$ is
\[
    G^{(\ell)} = \mathcal{B}^{\mathrm{obs}} \circ F^{(\ell)} \circ \mathcal{I}^{\mathrm{int}}, \; \ell=1,2,3.
\]

Now consider the true parameter vector ${\boldsymbol \beta}^* = [\beta_1^*, \dots, \beta_{25}^*]^{\top} \in \mathbb{R}^{25}$ as given by taking pointwise evaluations 
\begin{equation}\label{eq:TrueBetaComponents}
\beta^*_i = \beta_{\mathrm{true}}(x_i)\,,\qquad x_i = 5000 (i - 1)/24\,,\quad i = 1,\ldots,25\,,
\end{equation}
where
\[
    \beta_{\mathrm{true}}(x) = \log \begin{cases}
    1000 + 1000\sin\left(\frac{3\pi x}{5000} \right) + \zeta & \text{ if } 0 \le x < 2500,\\
    1000\left(16 - \frac{x}{250} \right) + \zeta & \text{ if } 2500 \le x < 4000,\\
    1000 + \zeta & \text{ if } 4000 \le x < 5000\, , 
    \end{cases}
\]
and $\zeta = 10^{-6}$ is a small positive constant to ensure that the log basal coefficient field remains bounded.
We generate synthetic observations $\by\in \mathbb{R}^{20}$ with
\[
    \by = G^{(L+1)}( {\boldsymbol \beta}^* ) + {\boldsymbol \eta},\quad {\boldsymbol \eta} \sim N(\zeros, \Gamma)\, ,
\]
where the noise covariance matrix $\Gamma$ is diagonal with $\sigma_{\mathrm{vertical}} = 3$ and $\sigma_{\mathrm{horizontal}} = 18$ corresponding to the vertical and horizontal velocity measurements, respectively.  Here the level $L+1$ (a further refinement of the high-fidelity mesh) is used to compute the observed data $\by$.  

The prior $\pi_0$ is Gaussian with mean perturbed from the true parameters ${\boldsymbol \beta}^*$ and diagonal covariance matrix with variance $0.05$ along each diagonal.  The starting distribution for SVGD and MLSVGD is the 25-dimensional standard normal distribution.  The gradients of the log posterior density are computed using adjoints with hIPPYlib~\cite{VillaPetraGhattas16,VillaPetraGhattas18,VillaPetraGhattas21}.  Our quantity of interest is the mean of the posterior distribution $\mathbb{E}_{\pi^{(L)}}[{\boldsymbol \beta} ]$ and we compute a reference value $\hat{\boldsymbol \beta}^{\mathrm{Ref}}$ by using the preconditioned Crank-Nicolson (pCN) method \cite{pCN}.  We run 100 independent chains and use a burn-in period of 10,000 samples for each chain to obtain $10^7$ total samples.  The parameter in the pCN algorithm is set to $10^{-2}$.

\subsection{SVGD algorithm with approximate gradients}
Consider an empirical measure 
\begin{equation}
        \hat{\mu}_{\tau}^{(N)} = \frac{1}{N} \sum_{i=1}^{N} \delta_{ \btheta^{[i]}_{\tau} }  \, ,
        \label{eq:SVGD_approx}
    \end{equation}
given by an ensemble of particles $\{\btheta_{\tau}^{[i]}\}_{i=1}^N$ with $\{\btheta_0^{[i]}\}_{i=1}^N \sim \mu_0$ and where $\delta_{\bfx}$ represents the Dirac-mass at $\bfx$. 
    Practical SVGD implementations alternate between using the ensemble of particles $\{\btheta_{\tau}^{[i]}\}_{i=1}^N$ at time $\tau$ to estimate the gradient \eqref{eq:SteinIdentity} 
    and using the estimated gradient to update the ensemble to obtain $\hat{\mu}_{\tau+1}$.  The gradient is estimated with Monte Carlo from the current ensemble of particles as
    \begin{equation}
        \hat{\bg}_{\tau}(\btheta) = -\frac{1}{N} \sum_{i=1}^N K(\btheta_{\tau}^{[i]}, \btheta) \nabla \log \pi^{(L)}(\btheta_{\tau}^{[i]}) + \nabla_1 K( \btheta_{\tau}^{[i]}, \btheta)\,.
    \label{eq:SVGDGradient}
    \end{equation}
    The SVGD algorithm then reuses the ensemble of particles and updates them according to the approximate gradient with step size $\delta$ as
    \begin{equation}
        \btheta_{\tau+1}^{[j]} =  \btheta_{\tau}^{[j]} - \delta \hat{\bg}_{\tau}(\btheta_{\tau}^{[j]}) \, , \quad j = 1,\ldots,N\,.
        \label{eq:SVGDUpdate}
    \end{equation}
    Because the Hellinger distance from the high-fidelity density $\pi^{(L)}$ at iteration $\tau$ is unknown, the integration time given by \eqref{eq:SLTime} cannot be determined practically.  Instead the stopping criteria is that the average norm of the gradient $\bar{g}_{\tau}$, defined as
    \[
        \bar{g}_{\tau} = \frac{1}{N}\sum_{j=1}^{N} \norm{\hat{\bg}_{\tau}(\btheta_{\tau}^{[j]}) } \, ,
    \]
    decreases below the predetermined threshold $\epsilon$.  The convergence of $\hat{\mu}_{\tau}$ to $\pi^{(L)}$ can no longer be measured in the KL divergence because at each iteration the measure $\hat{\mu}_{\tau}$ is no longer absolutely continuous with respect to the target $\pi^{(L)}$.  Moreover, the convergence properties as the number of particles $N \to \infty$ remains an open question.
    
    For a practical MLSVGD algorithm, an outer loop is performed over the levels $\ell=1,\ldots,L$ with the inner loop given by the SVGD updates~\eqref{eq:SVGDUpdate}.  At each intermediate level $\ell < L$ the gradients~\eqref{eq:SVGDGradient} are obtained by replacing the high-fidelity density $\pi^{(L)}$ with the low-fidelity density $\pi^{(\ell)}$.  Again, we cannot monitor the KL divergence to the target $\pi^{(\ell)}$ at each level $\ell$ as required by \eqref{eq:epsiloneq}.  Thus, the stopping criteria for when to terminate the SVGD iterations at the current level and proceed to the next level is that the norm of the gradient $\bar{g}_{\tau}$ decreases below the threshold $\epsilon$.

\subsection{Numerical results}

In the following we compare the performance of MLSVGD and SVGD.  We run both SVGD and MLSVGD with $N = 1,000$ particles, set a step size of $\delta = 0.05$, and use a Gaussian radial basis function kernel with the bandwidth parameter set to $h = 0.1$.  The bandwidth parameter is kept constant, but is comparable to the one obtained from using the median heuristic presented in \cite{NIPS2016_b3ba8f1b}.

\subsubsection{Number of iterations and runtime of SVGD and MLSVGD} Figure~\ref{fig:mlsvgd_vs_svgd} shows that with a gradient tolerance of $\epsilon = 10^{-2}$, MLSVGD achieves a speedup of a factor of five over SVGD despite requiring more iterations. Note that reducing the gradient norm below $\epsilon = 10^{-2}$ corresponds to a relative reduction of the gradient norm of more than four orders of magnitude. The results presented in Figure~\ref{fig:mlsvgd_vs_svgd} are consistent with the numerical examples presented in \cite{MLSVGD}. The runtime improvement of MLSVGD over SVGD is a result of most of the iterations being performed on the lowest fidelity model with the coarsest mesh.  MLSVGD quickly converges to the low-fidelity posterior $\pi^{(1)}$, which serves as a good initial distribution for the following two levels whereas SVGD requires many iterations at the high-fidelity level resulting in high computational costs.  The two plots in the right column of Figure~\ref{fig:mlsvgd_vs_svgd} show that both algorithms give accurate estimates of the quantity of interest in terms of the relative error 
\begin{equation}
    \mathrm{rel}({\boldsymbol \beta}) = \frac{\norm{{\boldsymbol \beta} - \hat{\boldsymbol \beta}^{\mathrm{Ref}}}_2}{\norm{\hat{\boldsymbol \beta}^{\mathrm{Ref}}}_2}\,,
\label{eq:RelErr}
\end{equation}
where $\boldsymbol \beta$ is the mean of the particles and $\hat{\boldsymbol \beta}^{\mathrm{Ref}}$ is the reference posterior mean computed with MCMC. The results suggest that the mean of the distributions of particles $\{\btheta_t^{[j]}\}_{j=1}^N$ is converging to the mean of the high-fidelity target posterior $\pi^{(L)}$.

\begin{figure}
\centering
\begin{tabular}{cc}
    \resizebox{.5\linewidth}{!}{\Large \input{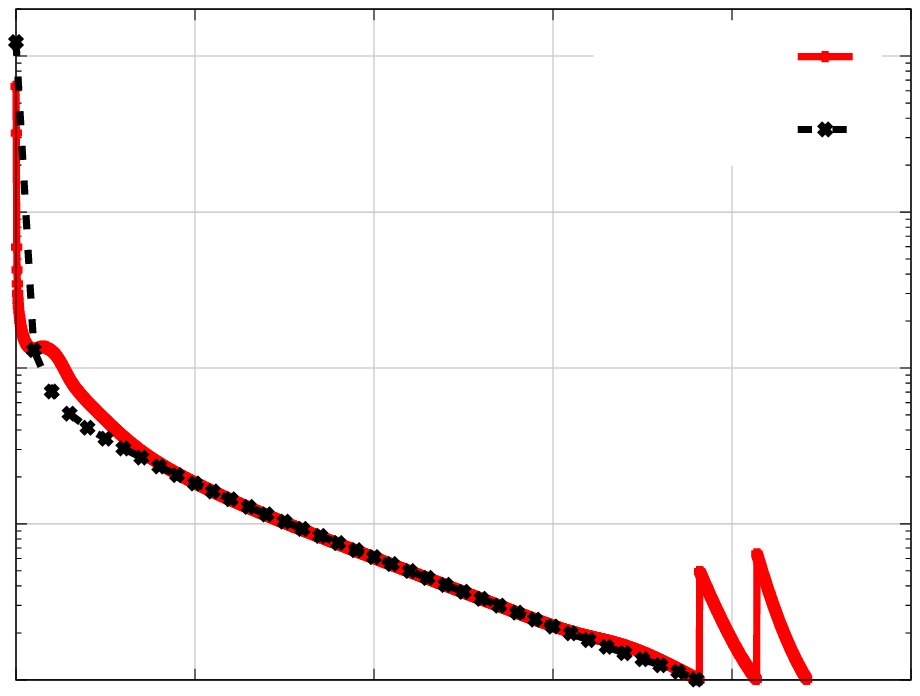}} &  \resizebox{.5\linewidth}{!}{\Large \input{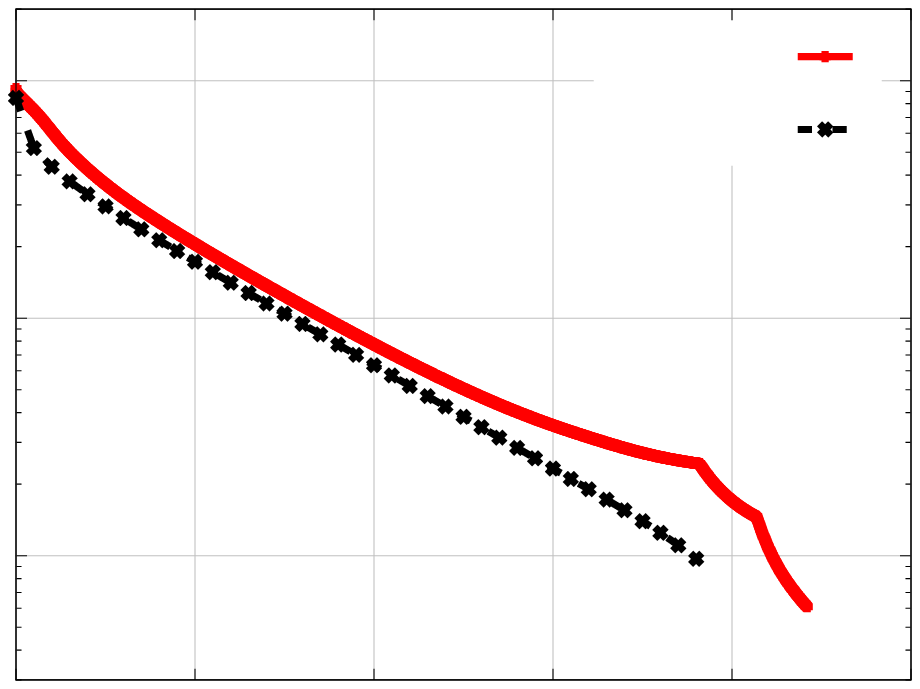}}\\
        {\small (a) gradient norm vs.~iteration} & \small (b) relative error vs. iteration \\
     \resizebox{.5\linewidth}{!}{\Large \input{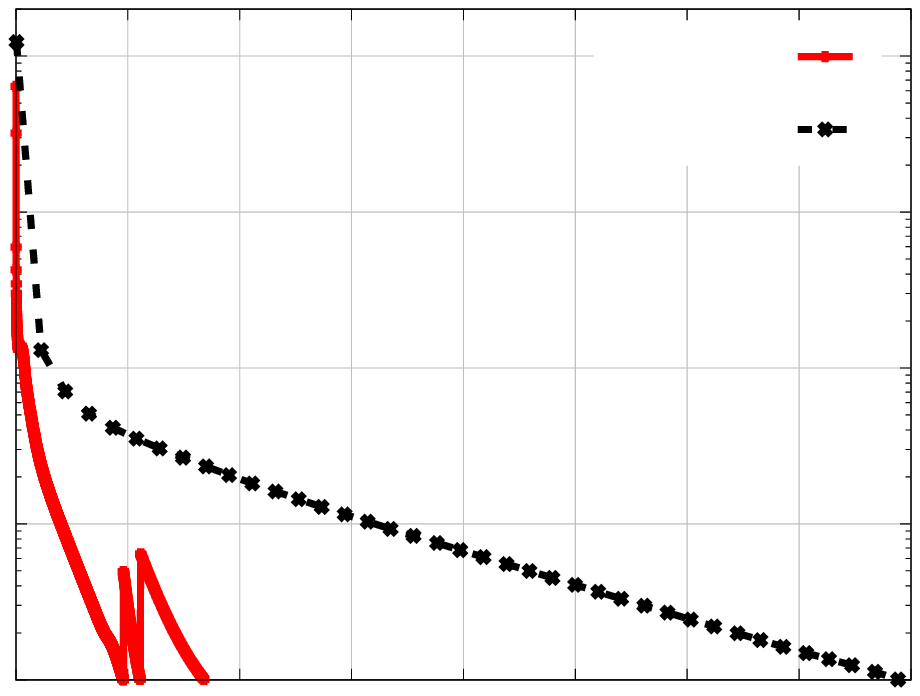}} &  \resizebox{.5\linewidth}{!}{\Large \input{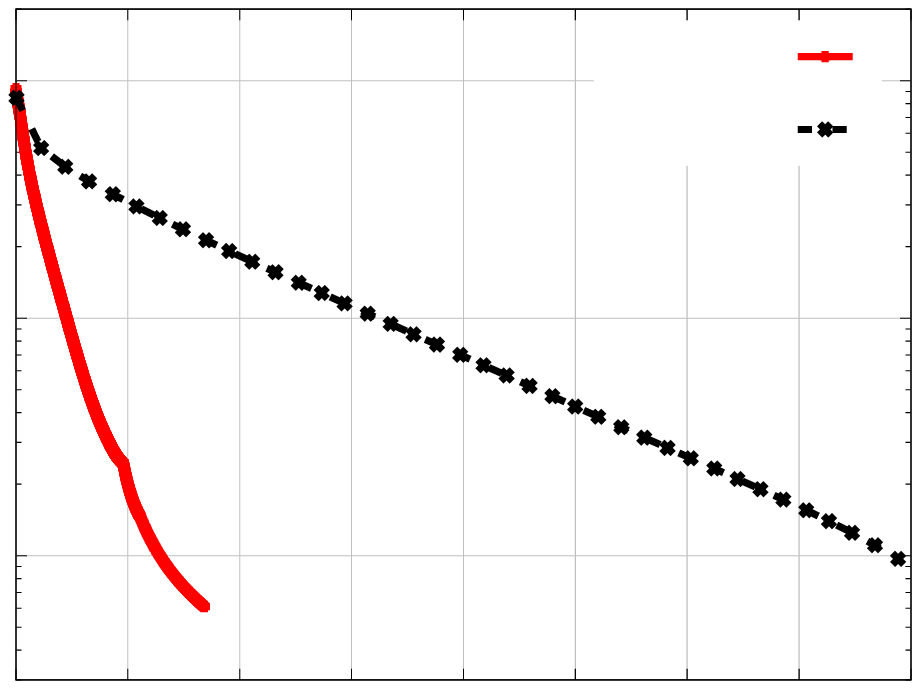}}\\
      \small (c) gradient norm vs.~runtime & \small (d) relative error vs. runtime
\end{tabular}
    \caption{{\bf (a)} The average gradient norm $\bar{g}_{\tau}$ vs. iteration for MLSVGD and SVGD with a tolerance of $\epsilon=10^{-2}$.  {\bf (b)} The relative error~\eqref{eq:RelErr} of MLSVGD and SVGD compared to an MCMC reference vs. iteration.  {\bf (c)} The average gradient norms vs. the actual runtime in hours over 32 cores.  {\bf (d)} The relative error vs. actual runtime.}
    \label{fig:mlsvgd_vs_svgd}
\end{figure}

\subsubsection{Speedups}

MLSVGD recovers a good approximation of the parameter ${\boldsymbol \beta}^*$ in less than a quarter of the time compared to SVGD because the low fidelity posteriors provide a good initialization.  Figure~\ref{fig:parameter_snapshots} compares the inferred parameter means from MLSVGD and SVGD after fixed amounts of training time.  After two hours MLSVGD has recovered the parameters whereas SVGD has not recovered them even after eight hours.  We also note that the coordinates of ${\boldsymbol \beta}$ corresponding to the right side of the domain are recovered much faster due to the location of the velocity observation points shown in Figure~\ref{fig:arolla_domain}.  Moreover, Figure~\ref{fig:forward_solution} shows the inferred velocity field $\bu$ by solving~\eqref{eq:arolla_pde} with the inferred parameter mean after approximately eight hours of run time over 32 cores.  We see that the velocity field obtained with the MLSVGD inferred parameter mean closely matches the velocity field obtained with the ground truth reference value of the mean. On the other hand, SVGD fails to recover the correct velocity field within the same amount of time.  Again we see that the left side of the domain is inaccurate due to the parameter in this region not yet being accurate.  Note that the magnitude of the velocity is overestimated for SVGD which is consistent with the fact that the parameter is underestimated since the parameter controls the frictional forces to resist the downward pull of gravity.

\begin{figure}
\vspace*{-0.35cm}\centering
\begin{tabular}{cc}
\resizebox{.48\linewidth}{!}{\Huge \input{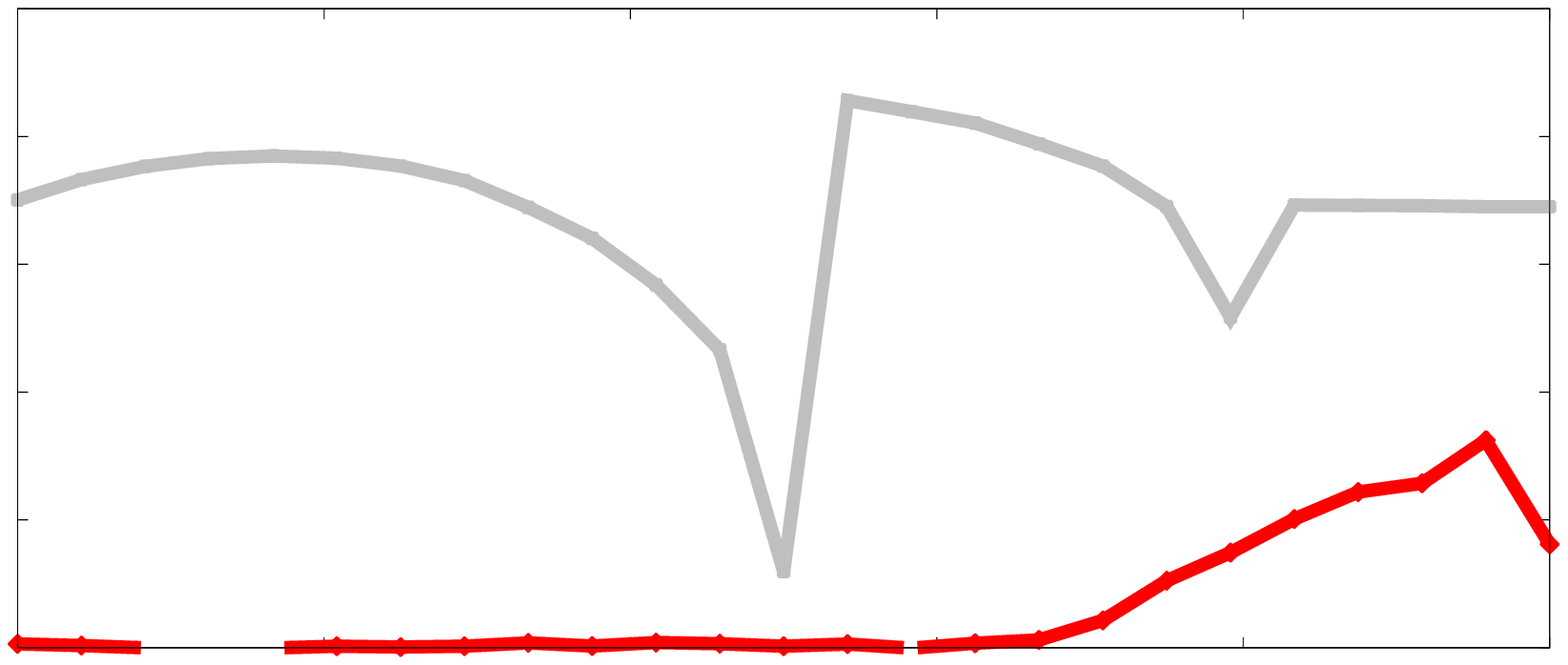}} & \hspace*{-0.2cm}
\resizebox{.48\linewidth}{!}{\Huge \input{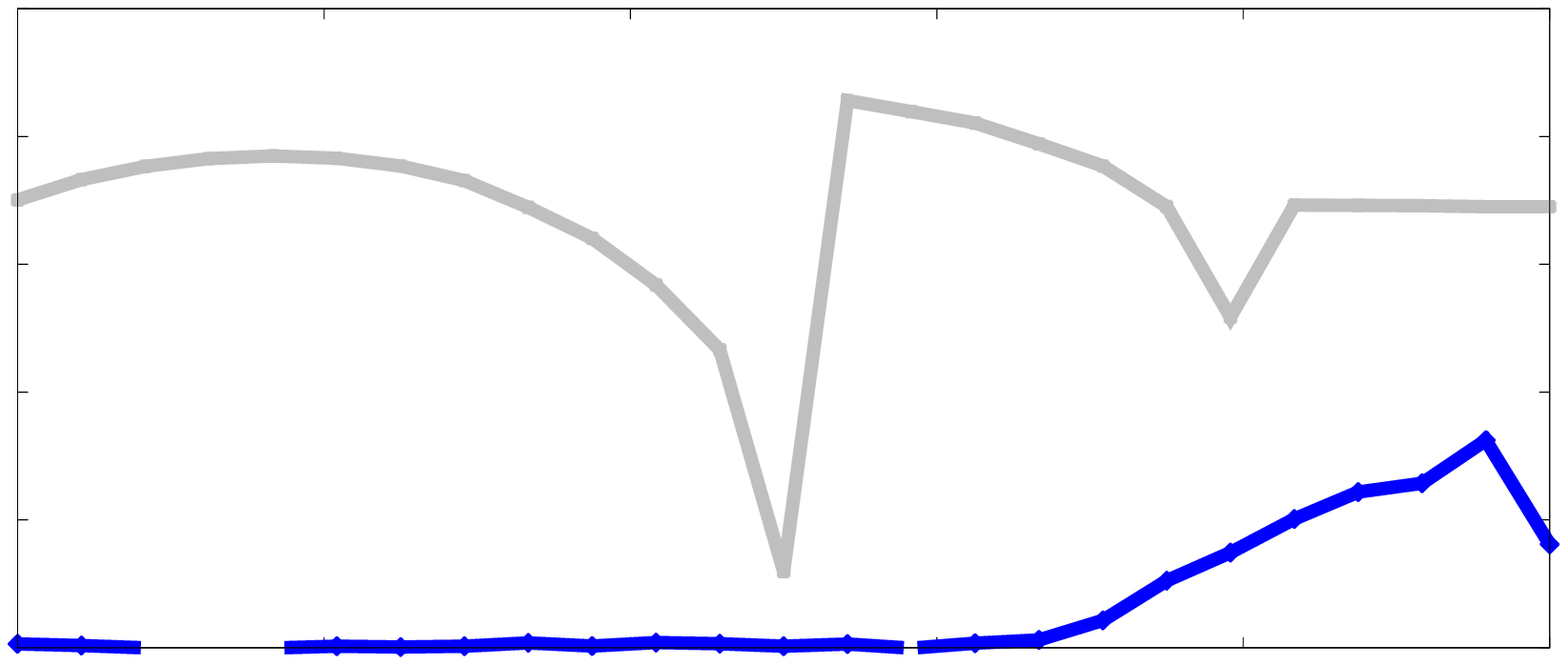}}\\
\scriptsize (a) MLSVGD, runtime $=$ 0.00 hrs. & \hspace*{-0.2cm} \scriptsize(b) SVGD, runtime $=$ 0.00 hrs.\\

\resizebox{.48\linewidth}{!}{\Huge \input{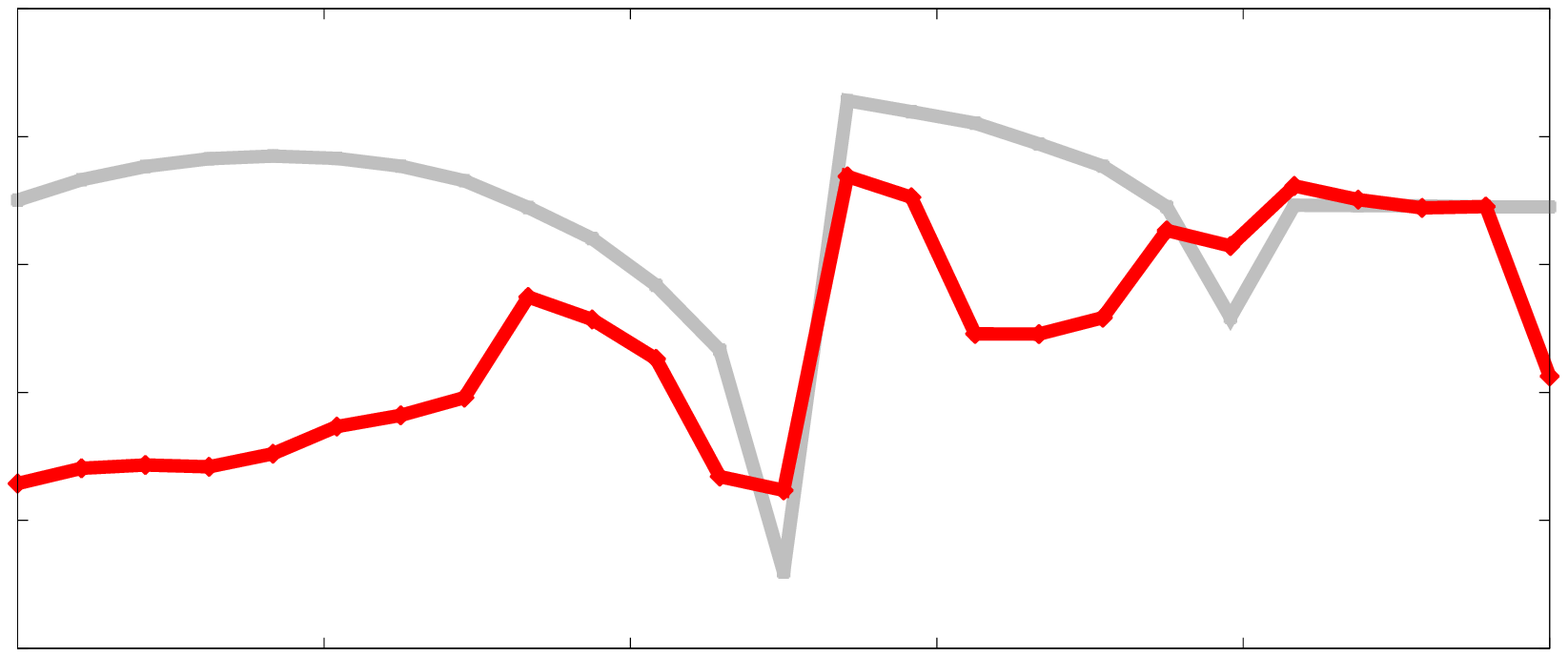}} & \hspace*{-0.2cm}
\resizebox{.48\linewidth}{!}{\Huge \input{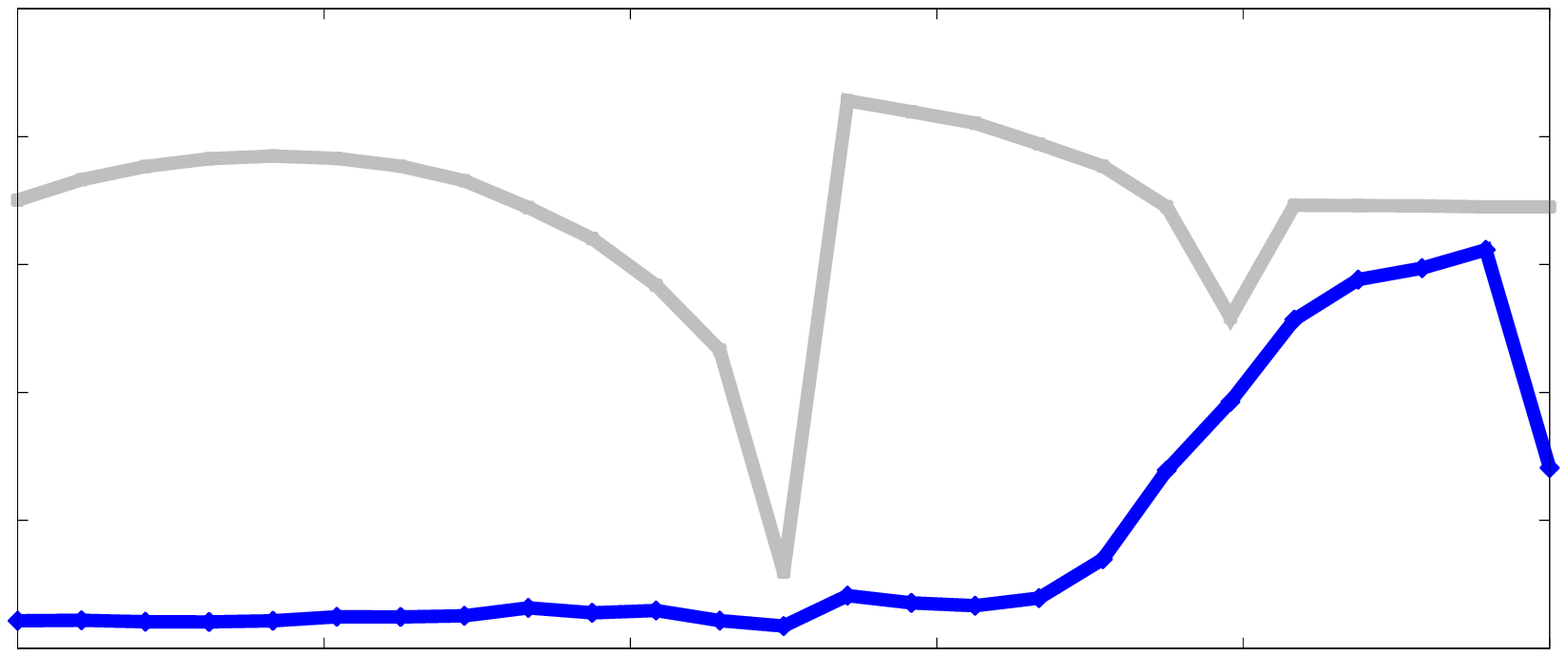}}\\
\scriptsize (c) MLSVGD, runtime $=$ 0.25 hrs.& \hspace*{-0.2cm} \scriptsize(d) SVGD, runtime $=$ 0.25 hrs.\\

\resizebox{.48\linewidth}{!}{\Huge \input{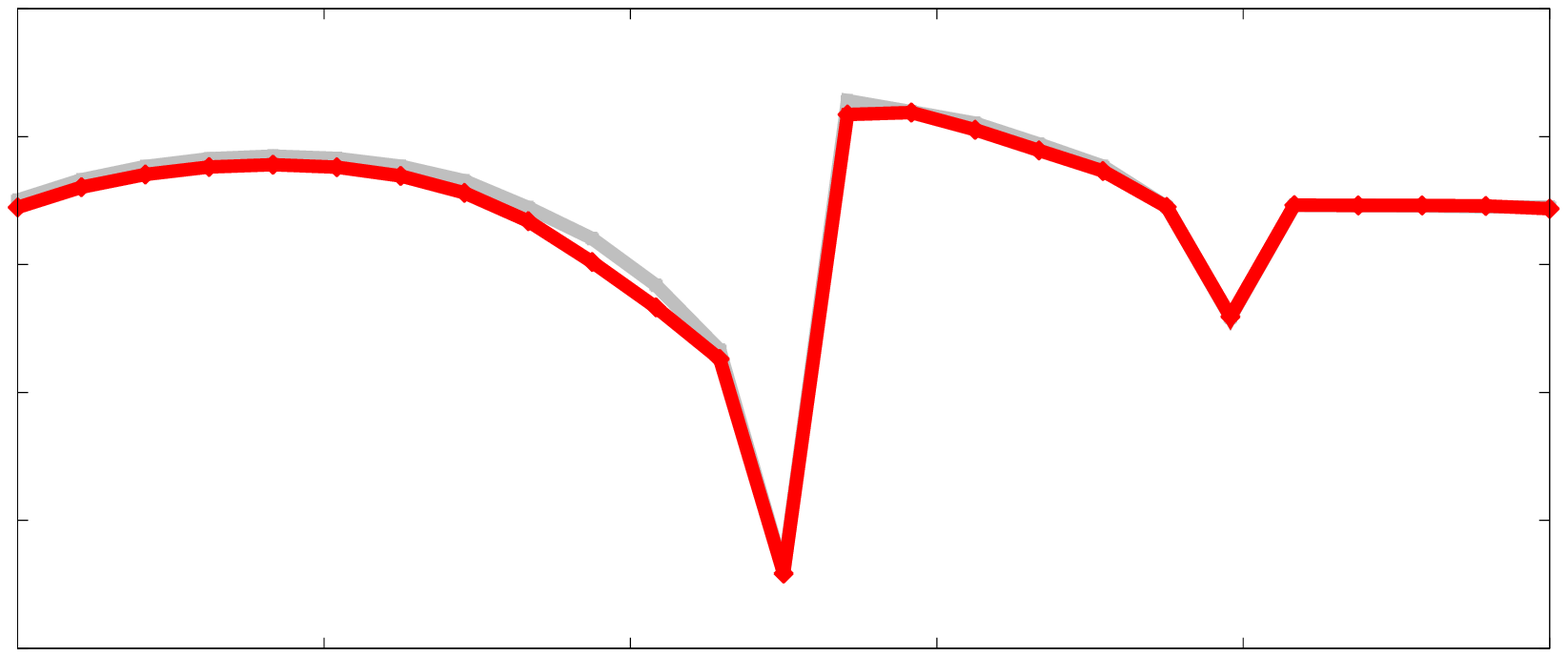}} & \hspace*{-0.2cm}
\resizebox{.48\linewidth}{!}{\Huge \input{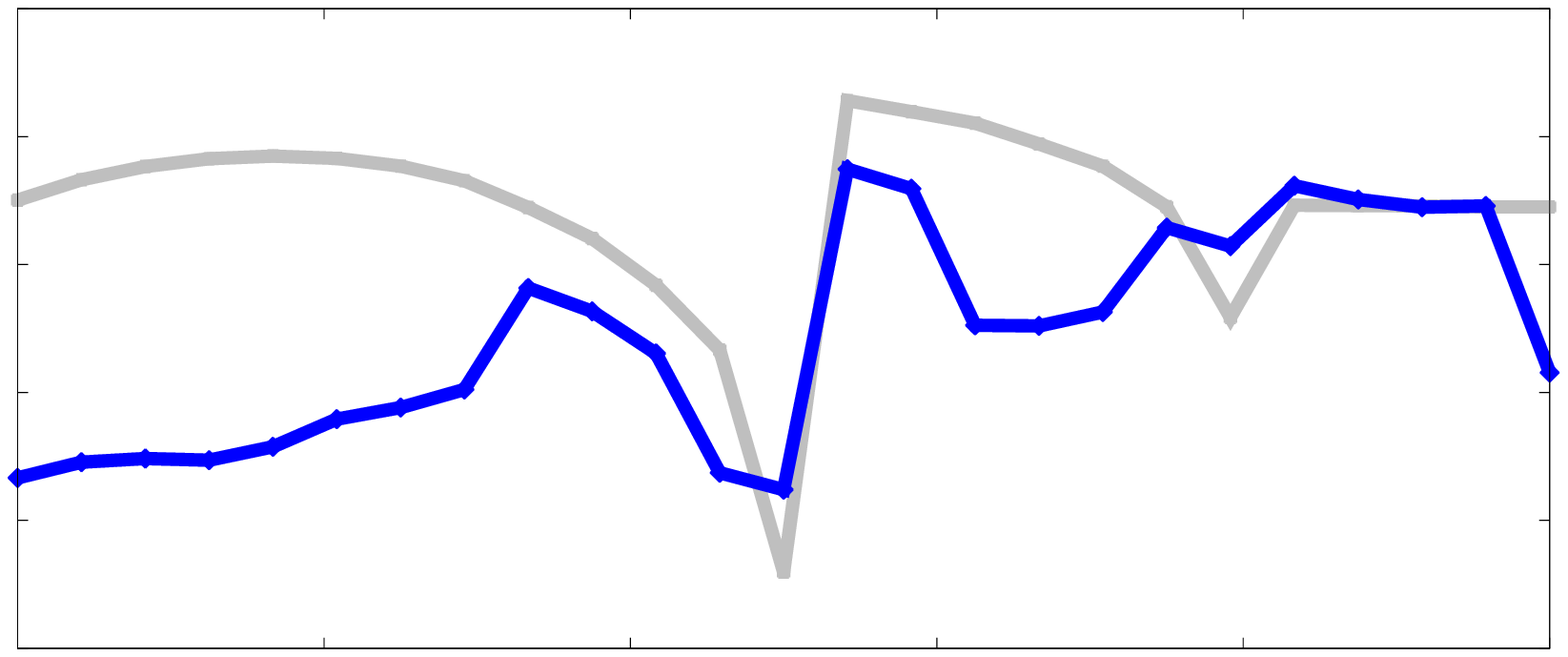}}\\
\scriptsize (e) MLSVGD, runtime $=$ 1.97 hrs. & \hspace*{-0.2cm} \scriptsize(f) SVGD, runtime $=$ 1.97 hrs.\\

\resizebox{.48\linewidth}{!}{\Huge \input{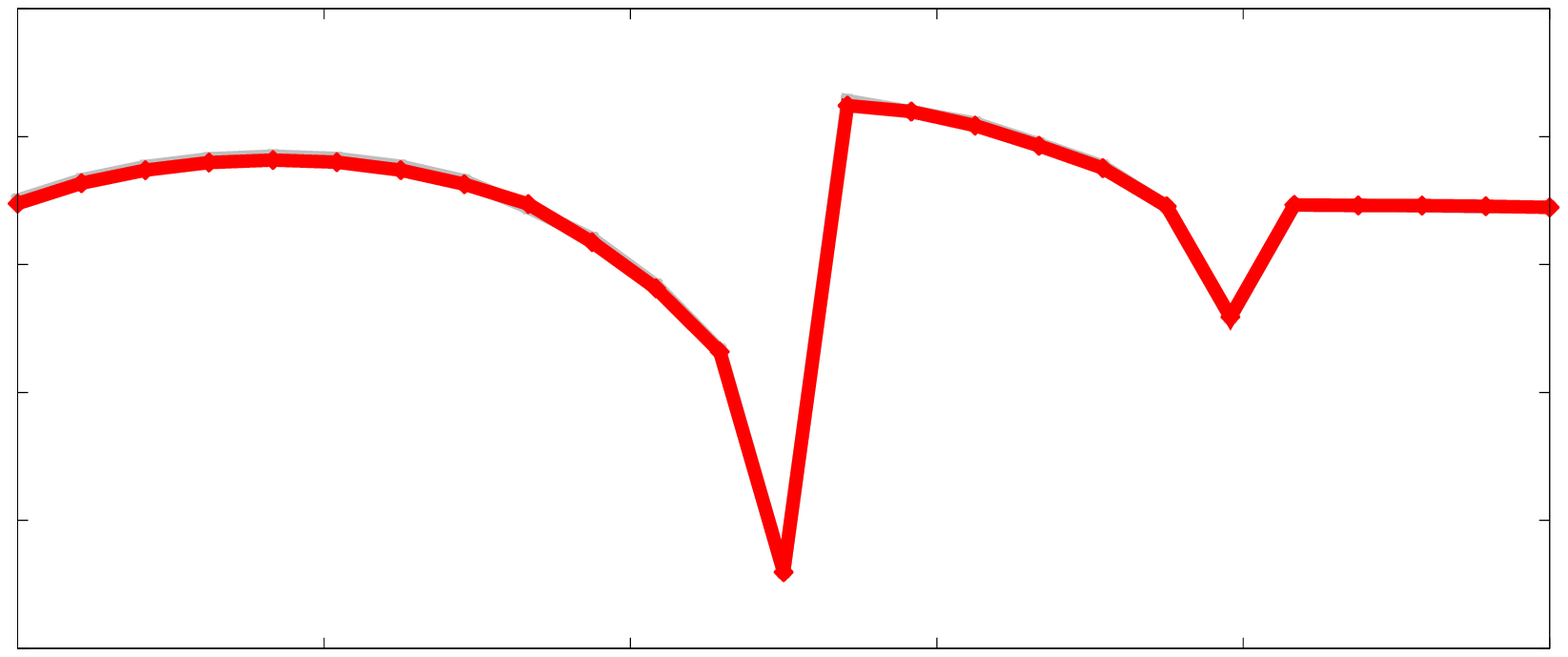}} & \hspace*{-0.2cm}
\resizebox{.48\linewidth}{!}{\Huge \input{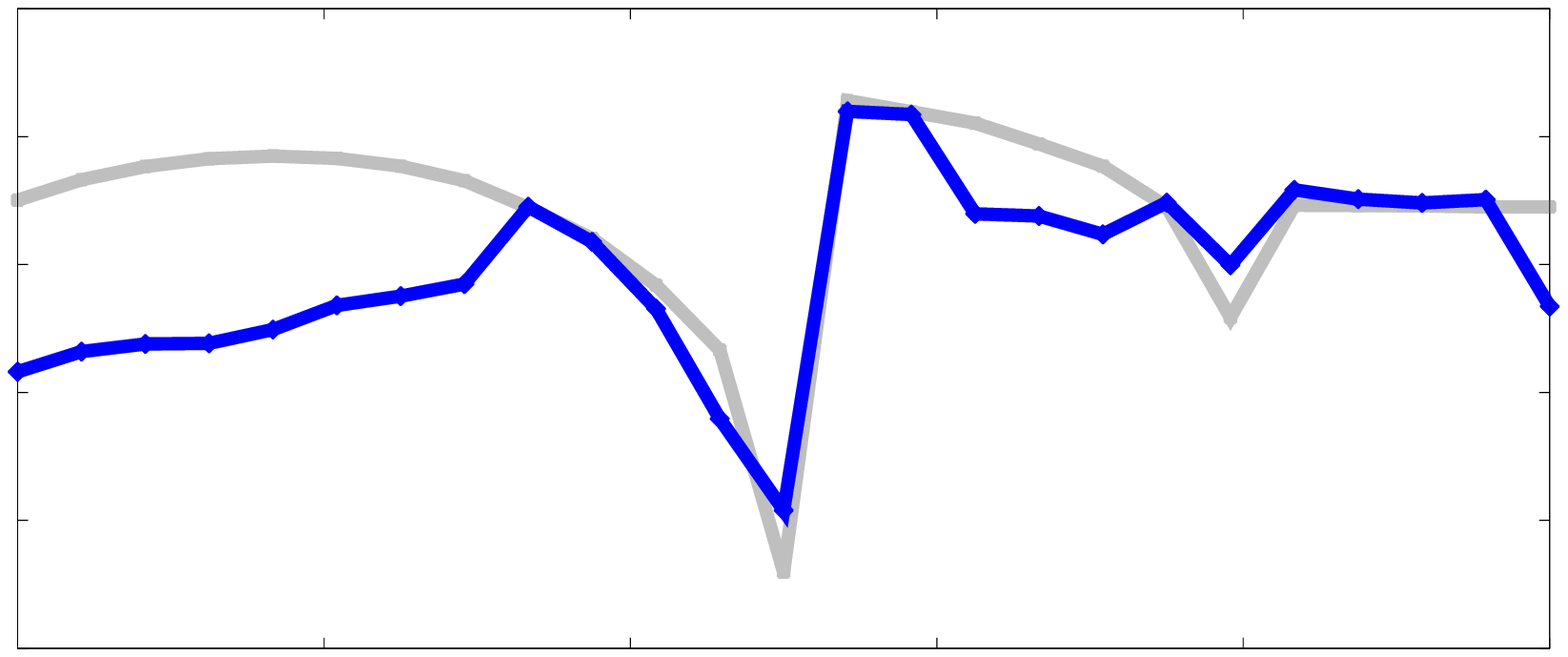}}\\
\scriptsize (g) MLSVGD, runtime $=$ 3.95 hrs. & \hspace*{-0.2cm} \scriptsize(h) SVGD, runtime $=$ 3.95 hrs.\\

\resizebox{.48\linewidth}{!}{\Huge \input{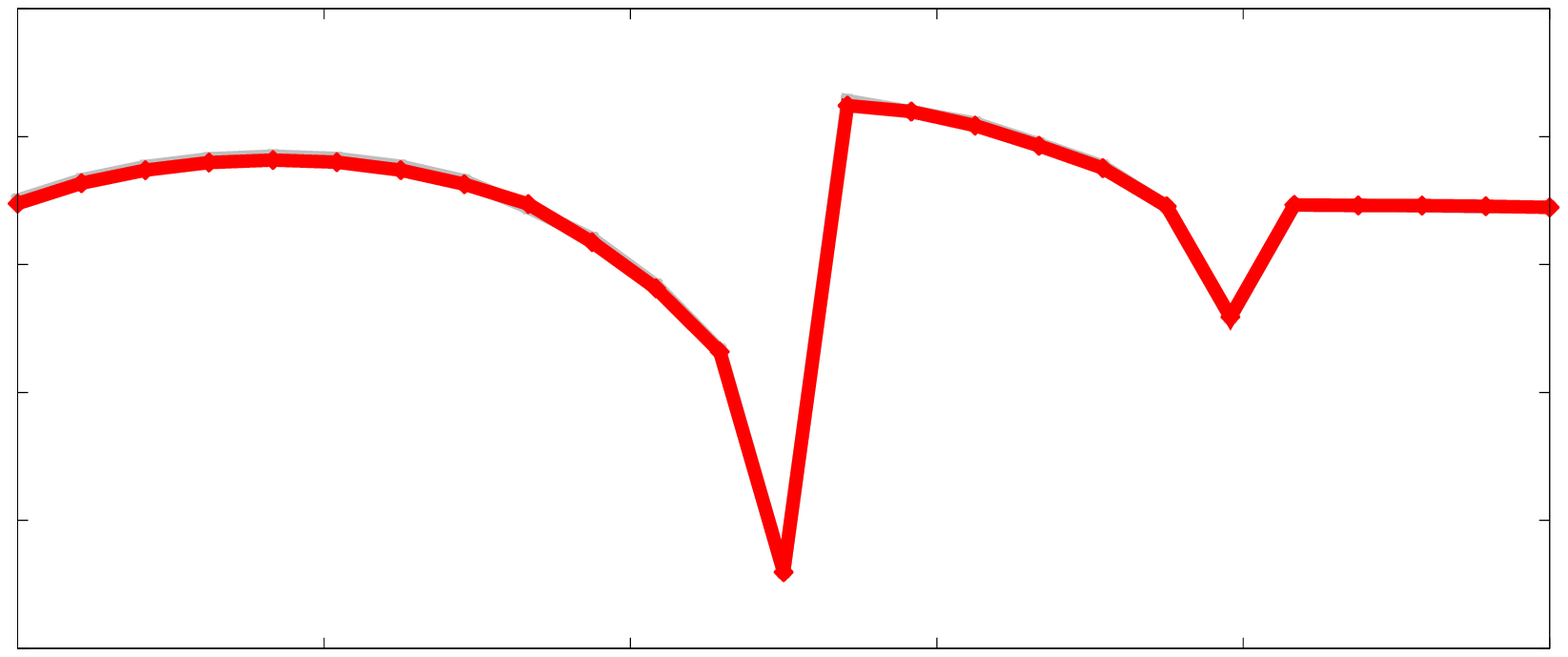}} & \hspace*{-0.2cm}
\resizebox{.48\linewidth}{!}{\Huge \input{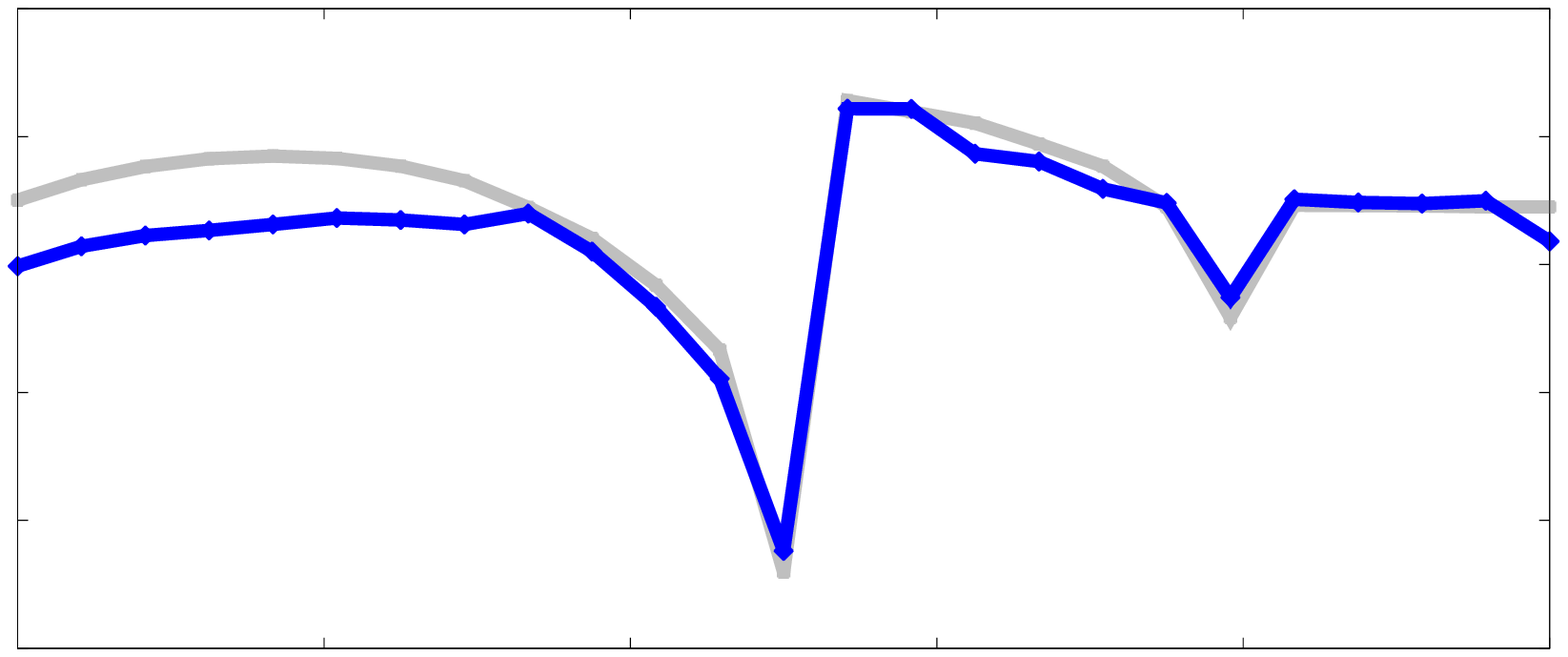}}\\
\scriptsize (i) MLSVGD, runtime $=$ 7.90 hrs. & \hspace*{-0.2cm} \scriptsize(j) SVGD, runtime $=$ 7.90 hrs.\\
\end{tabular}
\caption{({\bf Left}) Snapshots of the MLSVGD inferred parameter mean (red) at different times.  ({\bf Right}) Snapshots of the SVGD inferred parameter mean (blue) at the same times.  In each plot the solid light gray curve shows the reference value.}
\label{fig:parameter_snapshots}
\end{figure}

\begin{figure}
    \begin{tabular}{cc}
    \resizebox{.45\linewidth}{!}{
    \Large \input{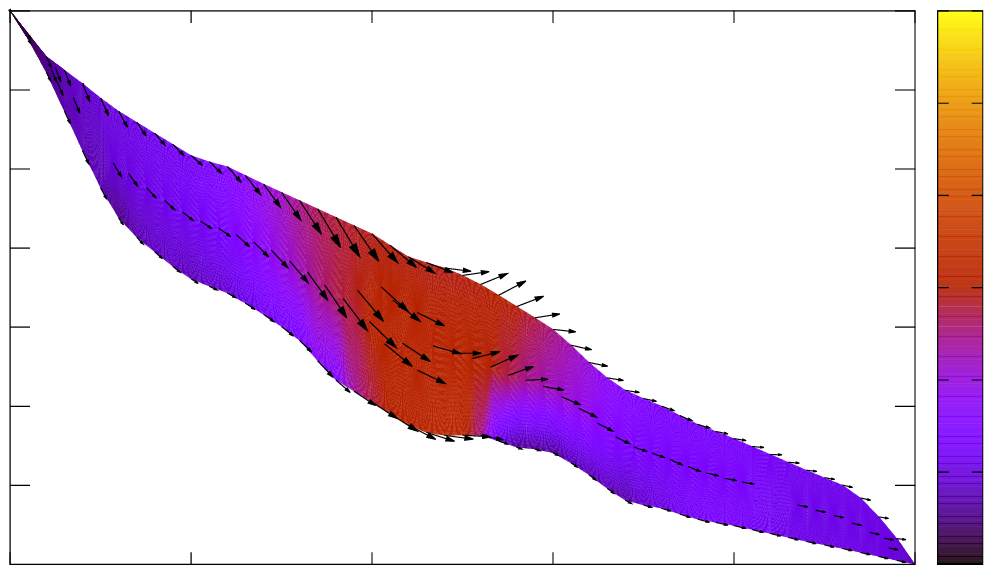}} & \hspace{0.8cm}
    \resizebox{.45\linewidth}{!}{
    \Large \input{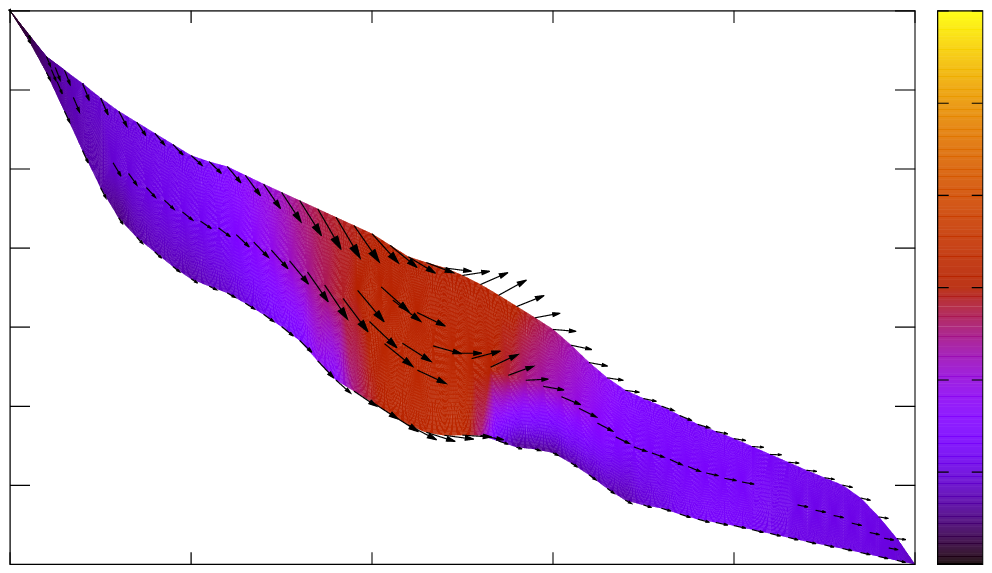}} \\
    (a) true velocity field & \hspace{0.8cm} (b) reference velocity field\\
    \resizebox{.45\linewidth}{!}{
    \Large \input{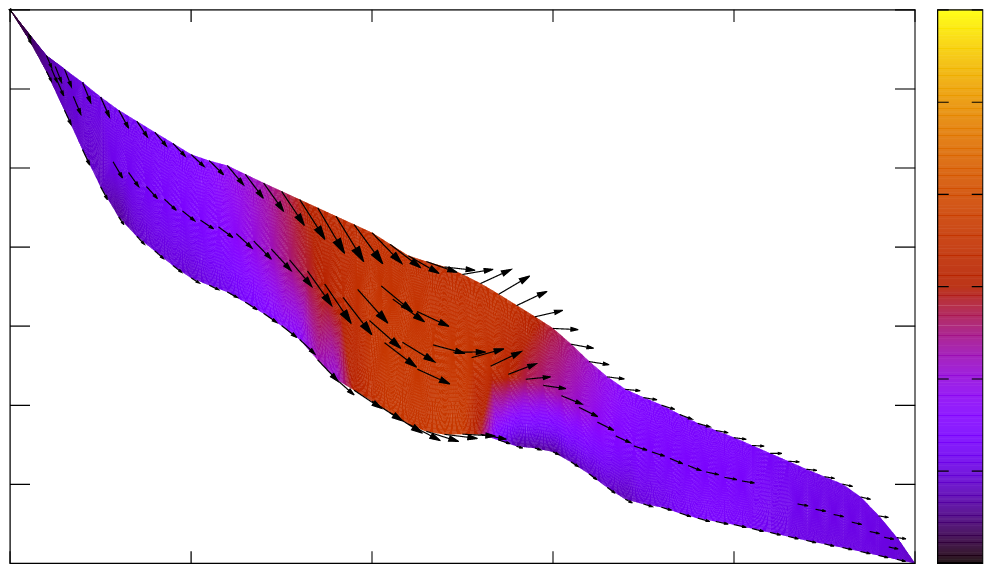}}
    & \hspace{0.8cm}
    \resizebox{.45\linewidth}{!}{
    \Large \input{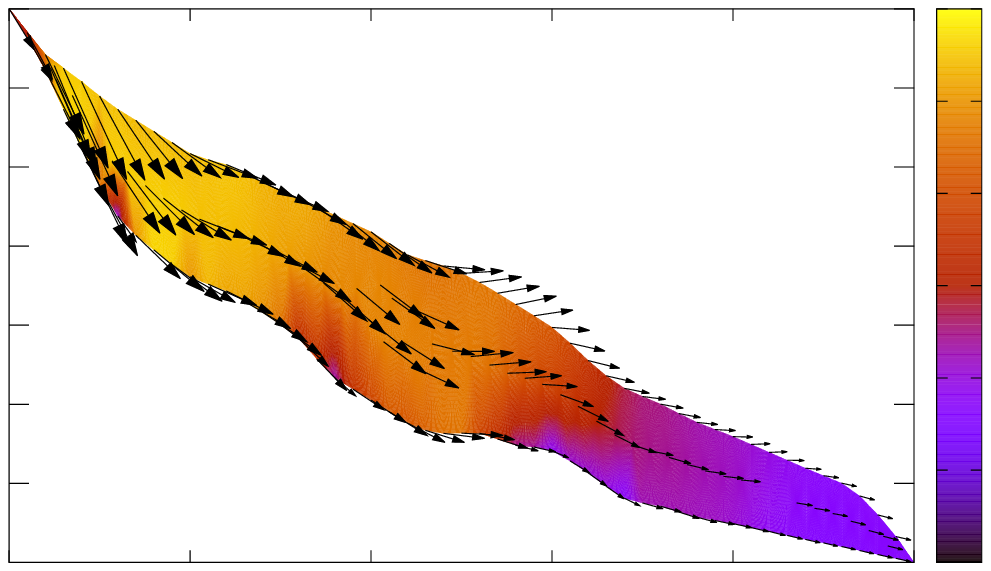}}\\
     (c) MLSVGD inferred velocity field
     & \hspace{0.8cm} (d) SVGD inferred velocity field\\
    \end{tabular}
    \caption{({\bf a}) The true velocity field given by ${\boldsymbol \beta}^*$, which is defined \eqref{eq:TrueBetaComponents}.  The color indicates the magnitude of the velocity in [$\mathrm{m}\ \mathrm{a}^{-1}$] (meters per year). ({\bf b}) The reference velocity field computed using $\hat{\boldsymbol \beta}^{\mathrm{Ref}}$ of the posterior mean.  ({\bf c}) The velocity field corresponding to the inferred parameters using MLSVGD after eight hours.  ({\bf d}) The velocity field corresponding to the inferred parameters using SVGD with equivalent costs as MLSVGD (eight hours of runtime).}
    \label{fig:forward_solution}
\end{figure}

\subsubsection{Sample quality}

Particles obtained with SVGD tend to be evenly spread out due to the repulsive interaction between particles given by the kernel. We measure sample quality with the maximum mean discrepancy (MMD)
    \[
        \mathrm{MMD}[\mu,\nu]^2 = \underset{\norm{f}_{\mathcal{H}} \le 1}{\sup} \left( \mathbb{E}_{\mu}[f] - \mathbb{E}_{\nu}[f] \right)^2 \, ,
    \]
    where $\mathcal{H}$ is the reproducing kernel Hilbert space with kernel $K$ \cite{Gretton}.  The MMD is zero if and only if the distributions $\mu = \nu$.  In practice one cannot evaluate the expectations exactly, so the following estimator \cite[Eq.~5]{Gretton} is often used instead
    \begin{multline}
    \widehat{\mathrm{MMD}}\left(\{ \bx_i \}_{i=1}^N, \{\by_j\}_{j=1}^M \right)^2    = 
     \frac{1}{N^2}\sum_{i=1}^N \sum_{i^{\prime}=1}^N K(\bx_i, \bx_{i^{\prime}}) + \frac{1}{M^2}\sum_{j=1}^M \sum_{j^{\prime}=1}^M K(\by_j, \by_{j^{\prime}})\\ - \frac{2}{NM}\sum_{i=1}^N \sum_{j=1}^M K(\bx_i, \by_{j})  \, ,
     \label{eq:mmd_estimator}
    \end{multline}
    where $\{\bx_i\}_{i=1}^N \sim \mu$ and $\{\by_j\}_{j=1}^M \sim \nu$.  To compute the MMD from the target distribution $\pi^{(L)}$ we use pCN with $\beta=0.01$ again to draw samples.  We use a burn-in period of 20,000 samples and then run 100,000 more iterations taking every 5th sample for 20,000 samples total.  These 20,000 samples serve as proxy samples from target posterior $\pi^{(L)}$.  Figure~\ref{fig:mmd} shows the estimated MMD for MLSVGD, SVGD, and MCMC.  We see that MLSVGD gives samples with comparable quality to SVGD due to the repulsive interaction between particles, and both SVGD and MLSVGD outperform MCMC (pCN) with the same sample size ($N = 1,000$).

\begin{SCfigure}
\centering
\resizebox{0.70\columnwidth}{!}{\large\input{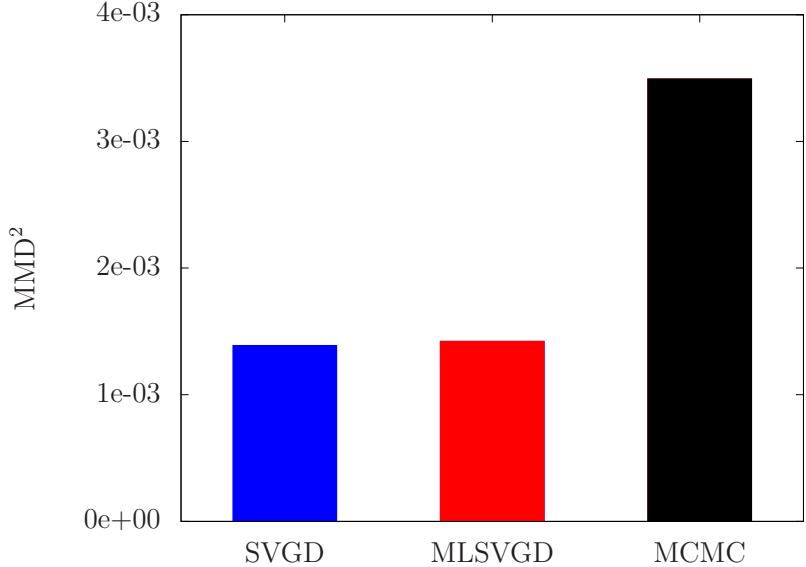}}
\caption{The estimated squared MMD using the estimator~\eqref{eq:mmd_estimator}. The MLSVGD approximation has a comparable MMD to the single-level SVGD with the high-fidelity model only.  Both have a lower MMD than MCMC suggesting higher quality samples.}
\label{fig:mmd}
\end{SCfigure}

\section{Conclusion}\label{sec:Conc}
We provided an extension of the analysis of the MLSVGD method and demonstrated with a Bayesian inverse problem of inferring a discretized basal sliding coefficient field that MLSVGD scales well to larger settings than the ones considered in prior work \cite{MLSVGD}. In particular, MLSVGD provides particles of comparable quality as SVGD but at greatly reduced computational costs in our numerical example. There are several avenues of future research. One is combining MLSVGD with the likelihood-informed projections introduced in \cite{NEURIPS2019_eea5d933}, which is especially useful in high-dimensional Bayesian inverse problems, where typically data inform only low-dimensional subspaces of the potentially high-dimensional spaces of the quantities of interest.

\section*{Declarations}

\paragraph{Funding} The first and third author acknowledge support from the Air Force Office of Scientific Research under Award Number FA9550-21-1-0222 (Dr.~Fariba Fahroo) and the National Science Foundation (NSF) under award IIS-1901091. The second and fourth author acknowledge support provided by the NSF under Grant No.~CAREER-1654311. The second author acknowledges further support provided by the NSF under Grant No.~DMS-1840265.

\paragraph{Competing interests}
The authors have no competing interests to declare that are relevant to the content of this article.

\bibliography{references}
\bibliographystyle{abbrv}

\end{document}